\newcolumntype{P}[1]{>{\raggedright\arraybackslash}p{#1}}
\theoremstyle{plain}
\newtheorem{theorem}{Theorem}[section]
\newtheorem{proposition}{Proposition}[section]
\newtheorem{corollary}{Corollary}[section]
\newtheorem{example}{Example}[section]
\numberwithin{equation}{section}
\newcommand{\R}{\mathbb{R}}
\newcommand{\N}{\mathbb{N}}
\newcommand{\E}{\mathbb{E}}
\newcommand{\kpr}{{k^\delta_\mathrm{pr}}}
\newcommand{\ekpr}{{\bar{k}^\delta_\mathrm{pr}}}
\newcommand{\kst}{{k^\delta_\mathrm{st}}}
\newcommand{\ekst}{{\bar{k}^\delta_\mathrm{st}}}
\newcommand{\kdp}{k^\delta_{\mathrm{dp}}}
\newcommand{\xdp}{x^\delta_{\kdp}}
\newcommand{\omd}{\chi_{\Omega_\kappa}}
\newcommand{\omk}{\chi_{\tilde{\Omega}_\ekst}}
\title{A Probabilistic Oracle Inequality and Quantification of Uncertainty of a modified Discrepancy Principle for Statistical Inverse Problems}
\author{ Tim Jahn\thanks{Institut f\"ur Numerische Simulation und Hausdorff Center for Mathematics, University of Bonn, Germany (\texttt{jahn@ins.uni-bonn.de})}}
\begin{document}

\maketitle

% REQUIRED
\begin{abstract}
In this note we consider spectral cut-off estimators to solve a statistical linear inverse problem under arbitrary white noise. The truncation level is determined with a recently introduced adaptive method based on the classical discrepancy principle. We provide probabilistic oracle inequalities together with quantification of uncertainty for general linear problems. Moreover, we compare the new method to existing ones, namely early stopping sequential discrepancy principle and the balancing principle, both theoretically and numerically.

\textbf{Key words}: statistical inverse problems, non-Bayesian approach, discrepancy principle, oracle inequality, early stopping
\end{abstract}

\section{Introduction}

In this note we take a closer look at a recently introduced modified discrepancy principle for solving an inverse problem by means of spectral cut-off. The problem of interest reads

\begin{equation}\label{in:eq1}
 Kx=y,
\end{equation}

where $K:\mathcal{X}\to \mathcal{Y}$ is a compact injective operator with dense range between infinite dimensional Hilbert spaces. The problem \eqref{in:eq1} is known to be ill-posed in the sense that $K$ is not continuously invertible on the whole space. This causes problems, since the exact right hand side $y^\dagger\in\mathcal{R}(K)\subset \mathcal{Y}$ is unknown and we just have access to component measurements corrupted by noise. For the abstract corrupted data we write

\begin{equation}
y^\delta = y^\dagger + \delta Z,
\end{equation}
 where $\delta>0$ is the noise level and $Z$ is centered white noise with finite second moments, i.e., it holds that
 
 \begin{enumerate}[(i)]
 	\item $\E[(Z,y)]=0$,
 	\item $\E[(Z,y)(Z,y')] = (y,y')$,
 	\item $(Z,y) \stackrel{d}{=} \frac{\|y\|}{\|y'\|}(Z,y')$
 \end{enumerate}
 
  	for all $y,y'\in\mathcal{Y}$. We have to give an approximation to the true solution $x^\dagger$ based on component measurements $(y^\delta,y_1),(y^\delta,y_2),...$ with $y_1,y_2,...\in\mathcal{Y}$. Under the above assumptions the forward operator $K$ can be fully described by its singular value decomposition. There exist orthonormal bases $(v_j)_{j\in\N}\subset \mathcal{X}$ and $(u_j)_{j\in\N}\subset \mathcal{Y}$ as well as a sequence $\sigma_1\ge \sigma_2 \ge ... >0$ converging to zero such that $Kv_j=\sigma_j u_j$ and $K^*u_j=\sigma_jv_j$ for all $j\in\N$. We define the spectral cut-off estimator of $x^\dagger$ via
  	
  	\begin{equation}
  	x_k^\delta :=\sum_{j=1}^k \frac{(y^\delta,u_j)}{\sigma_j}v_j
  	\end{equation}

with the truncation level $k$  to be determined from the data $y^\delta$ and the noise level $\delta$. Note that here we assume the noise level $\delta$ to be known in advance, see \cite{harrach2020beyond, 10.1093/imanum/drab098, mathe2017complexity} on how to estimate it in a general setting by a generic method.  It is well-known that spectral cut-off estimators have excellent theoretical properties, but may be infeasible for general problems in very high dimension due to the fact that the singular value decomposition is unknown in practice and has to be calculated with enormous costs. However, recently computationally efficient methods like the randomised singular value decomposition \cite{ito2019regularized} have been analysed in the context of regularisation theory and show the potential to allow the use of spectral cut-off estimators in practically relevant settings.
 Choosing a suitable truncation level $k$ is one of the main issues in regularisation theory, and plenty of different techniques have been analysed in the past. For general a priori error bounds depending on unknown properties of the exact solution $\hat{x}$ we refer to \cite{bissantz2007convergence}. Of special interest are adaptive (see \cite{goldenshluger1999pointwise, tsybakov2000best}) a posteriori methods which are strategies to choose $k$ only dependent on the noisy measurement $y^\delta$ and the noise level $\delta$.  Many of the parameter choice rules in statistical inverse problems are  adapted from classical statistical methods used for direct regressions problems, i.e., problems where $K$ is the identity in \eqref{in:eq1}. We name here empirical risk minimisation \cite{li2020empirical}, the balancing principle \cite{mathe2006regularization} and generalised cross validation \cite{lukas1993asymptotic}, which are based on Stein's unbiased risk estimation \cite{stein1981estimation}, Lepski's method \cite{lepskii1991problem} and cross validation \cite{wahba1977practical} respectively. Others have its roots in the classical deterministic regularisation theory, e.g., the famous discrepancy principle \cite{morozov1968error} or heuristic methods like the quasi-optimality criterion \cite{tikhonov1965use, kindermann2018quasi} or the L-curve method \cite{hansen1992analysis}.
 In \cite{jahn2021optimal} we proposed a modification of the discrepancy principle for a data-driven choice of $k$. Originally, the discrepancy principle has its root in the classical deterministic theory where one assumes that one has an absolute upper bound of the norm of noise $\|y^\delta-y^\dagger\|$.  It then follows the paradigm that the data $y^\delta$ should only be approximated up to the amount of noise, i.e., $k$ should be determined such that $\|Kx_k^\delta - y^\delta\| \approx \|y^\delta-y^\dagger\|$. In the white noise setting however, it holds that $\E\|y^\delta-y^\dagger\|^2 = \sum_{j=1}^\infty\delta^2\E(Z,u_j)^2 = \infty$ and therefore the classical discrepancy principle is not applicable. Consequently the discrepancy principle has to be adapted adequately to the white noise case. This has been done in the past by either pre-smoothing the problem \eqref{in:eq1}, see \cite{blanchard2012discrepancy, lu2014discrepancy}, or by working directly on a finite dimensional discrete problem \cite{vogel2002computational, blanchard2018early}. In the recent work \cite{jahn2021optimal} we proposed a modified discrepancy principle using discretisation. However, different to the above mentioned works here the discretisation dimension is treated as an additional regularisation parameter.
  Precisely, we discretise \eqref{in:eq1} and consider only the first $m$ components $(y^\delta,u_1),...,(y^\delta,u_m)$. Then the data error fulfills $\E\left[\left\|\sum_{j=1}^m(y^\delta-y^\dagger,u_j) u_j\right\|^2\right] = m\delta^2$ and the classical implementation of the discrepancy principle  yields

\begin{equation}\label{kmax}
 \kdp(m):=\left\{k\ge 0 ~:~\sqrt{\sum_{j=k+1}^m(y^\delta,u_j)^2}\le \tau \sqrt{m}\delta \right\},
\end{equation}

where $\tau>1$ is a fudge parameter. It remains to determine the discretisation level $m$. In \cite{jahn2021optimal} it was shown that the ultimate choice

\begin{equation}\label{kmax1}
  \kdp = \max_{m\in\N}\kdp(m)
\end{equation}

yields a convergent regularisation method, as the noise level $\delta$ tends to zero. In this article we give a more complete picture and generalise the main result from \cite{jahn2021optimal} to arbitrary $K$ and prove an oracle type inequality \cite{donoho1994ideal} with a controlled probability, instead of asymptotic convergence rates under source conditions (\cite{lavrentiev1962nekotorykh, mathe2008general}).  Oracle inequalities became popular in statistics and guarantee that after fixing a family of estimators (in our case spectral cut-off estimators, where the estimators are indexed by the truncation level $k$) one obtains up to a constant the optimal error for given exact solution $\hat{x}$. They depend strongly on the fact that under white noise one has sharp estimates for the data propagation error; the variance. Minimax Convergence rates over source sets on the other hand are classic in the deterministic theory, where one only knows an upper bound of the norm of the error, without any structural information. They guarantee optimal convergence in a worst-case fashion where the true solution is an element of an unknown source set. See the survey article \cite{cavalier2011inverse} for more details. We state here a special case of the general main result presented in the following section \ref{sec2}.

\begin{corollary}\label{cor1}
	Assume that there exist $q,c_q,C_q>0$ such that $C_q j^{-q}\ge \sigma_j^2 \ge c_q j^{-q}$ for all $j\in\N$ and assume that the white noise $Z$ has finite fourth moment $\E(Z,y)^4\le \|y\|^4 \gamma_4<\infty$. Then for all $\kappa\ge 3$ there holds
	
	$$\sup_{\substack{x^\dagger\in\mathcal{X}\\ \ekpr(x^\dagger)\ge k_\delta}}\mathbb{P}\left(\|\xdp - x^\dagger\| \le C_{\tau}\min_{k\in\N}\|x_k^\delta-x^\dagger\|\right)\ge 1 - \max\left(\frac{12}{\tau^2+2\tau-3},9\right)\sqrt{\frac{2(1+\gamma_4)}{\kappa}} - \frac{2(\gamma_4+1)\bar{C}_q}{\kappa},$$
	
	with $C_{\tau}:=\max\left(\sqrt{2}\left(\frac{\tau+1}{\tau-1} +1\right), \sqrt{2}+(2\tau+1)\sqrt{\frac{9^{1+q}(1+q)C_q}{c_q}4}\right)$ and $\bar{C}_q$ given in Section \ref{sec:proofs}.
\end{corollary}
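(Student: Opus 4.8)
The plan is to deduce Corollary \ref{cor1} from the abstract oracle inequality of Section \ref{sec2}, specialising every quantity there under the two additional hypotheses: the two-sided polynomial bound $c_q j^{-q}\le\sigma_j^2\le C_q j^{-q}$ and the fourth-moment bound $\E(Z,y)^4\le\gamma_4\|y\|^4$. I would organise the argument around two good events, $\Omega_\kappa$ and $\tilde\Omega_\ekst$ (whose indicators $\omd,\omk$ are already fixed in the notation), on whose intersection the target inequality $\|\xdp-x^\dagger\|\le C_\tau\min_k\|x_k^\delta-x^\dagger\|$ holds deterministically; the two tasks are then to bound $\PP(\Omega_\kappa^c)$ and $\PP(\tilde\Omega_\ekst^c)$ and to exhibit an admissible $C_\tau$. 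As a preliminary I would record the error split $\|x_k^\delta-x^\dagger\|^2=\sum_{j>k}(x^\dagger,v_j)^2+\delta^2\sum_{j\le k}\sigma_j^{-2}$ into strong bias and variance, together with its weak version $\sum_{j>k}(y^\dagger,u_j)^2=\sum_{j>k}\sigma_j^2(x^\dagger,v_j)^2$, since the stopping rule \eqref{kmax} operates on the weak residual scale whereas the conclusion is on the strong scale.

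The deterministic heart is the comparison constant $C_\tau$. Its first branch $\sqrt2(\tfrac{\tau+1}{\tau-1}+1)$ is the classical discrepancy-principle bias constant and survives without any decay assumption. The second branch is the variance side: on the good events the discrepancy level $\kdp$ overshoots the relevant oracle index by at most a fixed factor (nine, here), and I would convert this index comparison into a comparison of strong variances $\delta^2\sum_{j\le k}\sigma_j^{-2}$ by means of the two-sided bound. Concretely, upper bounding $\sigma_j^{-2}\le c_q^{-1}j^{q}$, lower bounding $\sigma_j^{-2}\ge C_q^{-1}j^{q}$, and comparing the partial sums of $j^{q}$ at levels $9k$ and $k$ through $\sum_{j\le k}j^{q}\asymp k^{q+1}/(q+1)$, I expect exactly the three factors $9^{1+q}$ (the index overshoot), $C_q/c_q$ (the transfer between the two singular-value bounds) and $(1+q)$ (the sum-to-integral comparison) to surface under the square root. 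The restriction $\ekpr(x^\dagger)\ge k_\delta$ enters precisely here: it places the weak oracle index above the noise-determined threshold $k_\delta$, so that the leading polynomial terms dominate and these two-sided estimates are not spoiled by boundary effects at small indices.

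The probabilistic part splits into two one-sided controls of the empirical residual $R_k^m:=\sum_{j=k+1}^m(y^\delta,u_j)^2$ around its mean $\E R_k^m=\sum_{j=k+1}^m(y^\dagger,u_j)^2+(m-k)\delta^2$. To show $\kdp$ does not stop far beyond the oracle I would control $R_k^m$ simultaneously for all $m$: writing $R_k^m-\E R_k^m$ as a sum of centred variables $\delta^2((Z,u_j)^2-1)+2\delta(y^\dagger,u_j)(Z,u_j)$, whose variances are bounded by $(\gamma_4+1)\delta^4$ up to the source terms, a Chebyshev estimate at each scale followed by summation against the polynomial weights $j^{-q}$ yields a convergent series whose value I would collect into $\bar C_q$, giving $\PP(\Omega_\kappa^c)\le \tfrac{2(\gamma_4+1)\bar C_q}{\kappa}$. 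The complementary bound, that $\kdp$ does not stop too early, is a local Chebyshev estimate for the normalised residual $\tfrac1m(R_k^m-\E R_k^m)$ at the prediction scale; setting the threshold at the $\sqrt\kappa$ level converts the variance of order $(1+\gamma_4)/m$ into the $\sqrt{2(1+\gamma_4)/\kappa}$ rate, with the prefactor $\max(12/((\tau+3)(\tau-1)),9)$ recording the fudge parameter through $\tau^2+2\tau-3=(\tau+3)(\tau-1)$.

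I expect the main obstacle to be the uniform-in-$m$ control forced by the definition $\kdp=\max_m\kdp(m)$ in \eqref{kmax1}: because the stopping index is a supremum over infinitely many discretisation levels, a naive union bound diverges, and it is only the polynomial decay that renders the summed Chebyshev bounds convergent and produces a finite $\bar C_q$. Handling both failure directions (under- and over-fitting) simultaneously on the single scale parameter $\kappa$, while keeping the weak-to-strong transfer mediated by the singular values sharp enough to land on the stated constant, is the delicate point; the remaining steps are routine once the two good events are correctly specified.
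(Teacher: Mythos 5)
Your deterministic skeleton matches the paper's: two good events $\Omega_\kappa$ and $\tilde{\Omega}_\ekst$, a case split according to whether $\kdp$ exceeds the strong oracle, the first branch of $C_\tau$ coming from the classical stability estimate \eqref{th2:eq1}, and the second branch from converting the sandwich $\frac{\ekst}{3}\le\kst\le 3\ekst$ into a comparison of strong variances via the two-sided polynomial bound — the factors $9^{1+q}$, $C_q/c_q$ and $(1+q)$ arise exactly as you predict. (Minor point: the factor $9$ measures the gap between the upper and lower deterministic brackets of $\kst$ in terms of $\ekst$, not an overshoot of $\kdp$ over the oracle.)

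The probabilistic part, however, contains a genuine gap. You propose to control the event that is uniform in $m$ by a Chebyshev estimate at each scale ``followed by summation against the polynomial weights $j^{-q}$,'' claiming the polynomial decay is what makes the union bound converge and produces $\bar C_q$. This cannot work: the uniform-in-$m$ event $\Omega_\kappa$ lives on the \emph{weak} (residual) scale, where the fluctuation is $\frac{1}{m}\sum_{j=1}^m((Z,u_j)^2-1)$ with \emph{no} singular-value weights, so a union of Chebyshev bounds gives $\sum_{m\ge\kappa}O(1/m)=\infty$ regardless of the decay of $\sigma_j$. The paper handles the supremum over all $m\ge\kappa/3$ in one shot via the Kolmogorov--Doob maximal inequality for the reverse martingale $(\frac1m\sum_{j=1}^m X_j)_m$ (Proposition \ref{prop2}), and then Marcinkiewicz--Zygmund with $p=4$ (Proposition \ref{prop1b}) yields the $\sqrt{2(1+\gamma_4)/\kappa}$ term — this works for arbitrary $K$. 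The polynomial decay is used only for the \emph{strong}-scale event $\tilde{\Omega}_\ekst$, and there the paper needs Chebyshev at just \emph{two} fixed indices, $\lceil\ekst/3\rceil$ and $3\ekst$, where the ratio $\sum_{j\le k}\sigma_j^{-4}/(\sum_{j\le k}\sigma_j^{-2})^2\lesssim 1/k$ gives the $2(\gamma_4+1)\bar C_q/\kappa$ term; no infinite union bound and no summed series $\bar C_q$ occur. You have in effect swapped which event produces which error term, and the mechanism you supply for the uniform event is the one step that genuinely fails without the martingale maximal inequality. A second, smaller inaccuracy: $\Omega_\kappa$ is defined purely on the noise $\sum_{j\le m}(y^\delta-y^\dagger,u_j)^2$, so no cross terms $2\delta(y^\dagger,u_j)(Z,u_j)$ need to be centred; the signal part of the residual is handled deterministically by the triangle inequality on the good event.
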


The quantity $\ekpr(x^\dagger)$ will be defined in the next section and ensures that $x^\dagger$ can be distinguished sufficiently from the zero signal, relative to the noise level $\delta$. In particular, it holds that $\ekpr(x^\dagger)\to \infty$ as $\delta\to 0$ for all non-degenerated $x^\dagger$; for which exist arbitrary large $j\in\N$ with $(x^\dagger,u_j)\neq 0$. To understand this condition better, consider the extreme case $x^\dagger=0$. Here the minimal error could only be obtained for $k_{dp}^\delta = 0$, which obviously cannot occur with  probability tending to $1$ (as e.g. $\delta\to0$), since a finite fixed number of (random) measurements cannot be controlled with probability tending to $1$. Thus the corollary states that for mildly ill-posed problems we obtain  up to a constant the best possible error for spectral cut-off regularisation, uniformly over all signals with sufficiently many relevant components. 
 
 In Section \ref{sec3} we comment on points which were left open in \cite{jahn2021optimal}, e.g. on how to perform the maximisation over the infinite set $\N$ practically. We end the introduction by stressing that the main difference of the proposed modified discrepancy principle to existing methods is the minimality of the assumptions. In Theorem \ref{th1} and \ref{th2} the white noise is only assumed to have a finite second moment and also the singular values of $K$ are arbitrary. Numerical and theoretical comparisons between different regularisation methods for statistical inverse problems can be found in e.g. \cite{bauer2011comparingparameter, werner2018adaptivity, lucka2018risk}.  In section \ref{sec3} and \ref{sec5} we compare our new method to the parameter choice strategies to which it is most closely related, namely the early stopping discrepancy principle (\cite{blanchard2018early, blanchard2018optimal}) and the balancing principle (\cite{mathe2003geometry,bauer2005lepskij,mathe2006lepskii}). In particular we show that similar to the balancing principle also the modified discrepancy principle coincides with Lepski's method in the direct case $K=Id$ and discuss how the different viewpoint yield an early-stopping type implementation.

\section{Main Results}\label{sec2}
For a truncation level $k$ we decompose the error in two parts, a data propagation error (variance) and an approximation error (bias)

\begin{align*}
\|x_k^\delta-x^\dagger\|^2 &= \sum_{j=1}^k\left(\frac{(y^\delta,u_j)}{\sigma_j} - (x^\dagger,v_j)\right)^2 + \sum_{j=k+1}^\infty(x^\dagger,v_j)^2 = \sum_{j=1}^k\frac{(y^\delta-y^\dagger,u_j)^2}{\sigma_j^2} + \sum_{j=k+1}^\infty(x^\dagger,v_j)^2\\
 &=\|x_k^\delta-\E[x_k^\delta]\|^2 + \|\E[x_k^\delta] - x^\dagger\|^2.
\end{align*}

For the analysis it will be convenient to not consider only the strong error $\|x_k^\delta-x^\dagger\|$, but also the predictive (or weak) error $\|K(x_k^\delta-x^\dagger)\|$, for which we obtain

\begin{equation*}
\|K(x_k^\delta-x^\dagger)\|^2 = \sum_{j=1}^k(y^\delta-y^\dagger,u_j)^2+\sum_{j=k+1}^\infty(y^\dagger,u_j)^2;
\end{equation*}

note that $(y^\dagger,u_j)=\sigma_j(x^\dagger,v_j)$.
Thus we can split the error in two parts, one part which increases monotonically with $k$ and another which decreases monotonically with $k$. Now note that minimising such a sum of a monotonically increasing and a monotonically decreasing term is in essence equivalent to balancing the two terms. We thus define the following two truncation levels 

\begin{align}
\kpr(x^\dagger):=\min\left\{k\in\N_0~:~ \sum_{j=1}^k(y^\delta-y^\dagger,u_j)^2 \ge \sum_{j=k+1}^\infty(y^\dagger,u_j)^2\right\},\\
\kst(x^\dagger):=\min\left\{k\in\N_0~:~ \sum_{j=1}^k\frac{(y^\delta-y^\dagger,u_j)^2}{\sigma_j^2} \ge \sum_{j=k+1}^\infty (x^\dagger,v_j)^2\right\}.
\end{align}

 They balance the competing error terms in strong and predictive norm respectively. For the sake of self-containedness we formulate the aforementioned well-known result (see e.g. Lemma 3.2. in \cite{bauer2008regularization}) which states that $\kpr$ or $\kpr-1$ and $\kst$ or $\kst-1$ (up to a constant factor) minimise the corresponding error norms.

\begin{proposition}\label{prop1}
	For all $x^\dagger\in\mathcal{X}$ with $\kpr(x^\dagger)\ge 1$ there holds
	\begin{align}
	    \min_{k\in\N_0}\|K(x_k^\delta-x^\dagger)\| &\ge \frac{1}{\sqrt{2}}\min\left(\|K(x_\kpr^\delta - x^\dagger)\|,\|K(x_{\kpr-1}^\delta-x^\dagger)\|\right)\\
	    \min_{k\in\N_0}\left[\|x^\delta_k-x^\dagger\|\right] &\ge \frac{1}{\sqrt{2}} \min\left(\|x_\kst^\delta-x^\dagger\|,\|x_{\kst-1}^\delta-x^\dagger\|\right).
	\end{align}
\end{proposition}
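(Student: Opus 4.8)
The plan is to reduce both inequalities to a single elementary fact about balancing a monotone increasing and a monotone decreasing nonnegative sequence, and then to apply that fact once to the predictive decomposition and once to the strong decomposition. For the predictive error I would set $V(k):=\sum_{j=1}^k(y^\delta-y^\dagger,u_j)^2$ and $B(k):=\sum_{j=k+1}^\infty(y^\dagger,u_j)^2$, so that $\|K(x_k^\delta-x^\dagger)\|^2=V(k)+B(k)$ with $V$ non-decreasing, $B$ non-increasing, and $\kpr(x^\dagger)$ is by definition the first index at which $V(k)\ge B(k)$. The strong case is structurally identical with $\tilde V(k):=\sum_{j=1}^k(y^\delta-y^\dagger,u_j)^2/\sigma_j^2$ and $\tilde B(k):=\sum_{j=k+1}^\infty(x^\dagger,v_j)^2$, for which $\|x_k^\delta-x^\dagger\|^2=\tilde V(k)+\tilde B(k)$ and $\kst(x^\dagger)$ is the first crossing index.

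The abstract claim I would isolate is the following: if $V$ is non-decreasing, $B$ non-increasing, $F:=V+B$, and $k^\ast:=\min\{k\in\N_0:V(k)\ge B(k)\}\ge 1$, then $\min_{k\in\N_0}F(k)\ge\tfrac12\min(F(k^\ast),F(k^\ast-1))$. Since squaring and taking square roots is monotone, this turns the factor $\tfrac12$ into $\tfrac1{\sqrt2}$ and yields exactly the two displayed inequalities once the identifications above are made.

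I would prove the abstract claim by splitting an arbitrary $k$ according to whether it lies at or above $k^\ast$, or strictly below it. If $k\ge k^\ast$, monotonicity of $V$ gives $V(k)\ge V(k^\ast)$, while the defining inequality $B(k^\ast)\le V(k^\ast)$ yields $F(k^\ast)\le 2V(k^\ast)$; hence $F(k)\ge V(k)\ge V(k^\ast)\ge\tfrac12 F(k^\ast)$. If $k\le k^\ast-1$, minimality of $k^\ast$ forces $V(k^\ast-1)<B(k^\ast-1)$, so $F(k^\ast-1)\le 2B(k^\ast-1)$, and monotonicity of $B$ gives $F(k)\ge B(k)\ge B(k^\ast-1)\ge\tfrac12 F(k^\ast-1)$. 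In either case $F(k)\ge\tfrac12\min(F(k^\ast),F(k^\ast-1))$, and taking the infimum over $k\in\N_0$ completes the argument.

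There is no serious obstacle here; the only point requiring care is the correct bookkeeping at the crossing index, namely that the definition of $\kpr$ (resp. $\kst$) supplies $V\ge B$ at $k^\ast$, while its minimality supplies the strict reverse inequality at $k^\ast-1$. This is precisely what forces one to compare indices above $k^\ast$ to $F(k^\ast)$ and indices below to $F(k^\ast-1)$, which in turn explains why the minimum over the two neighbouring levels $\kpr$ and $\kpr-1$ (rather than a single level) appears on the right-hand side. The hypothesis $\kpr(x^\dagger)\ge1$ is exactly what guarantees $k^\ast-1\in\N_0$, so that $F(k^\ast-1)$ is well defined.
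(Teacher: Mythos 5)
Your proof is correct and follows essentially the same route as the paper: the paper's argument is exactly your abstract balancing lemma, instantiated directly on the sums $\sum_{j=1}^k(y^\delta-y^\dagger,u_j)^2$ and $\sum_{j=k+1}^\infty(y^\dagger,u_j)^2$ (and their strong-norm analogues), using $V(k^\ast)\ge B(k^\ast)$ for $k\ge\kpr$ and the strict reverse inequality at $\kpr-1$ for $k\le\kpr-1$. Your only departure is to isolate the monotone-balancing fact as a standalone claim before applying it twice, which the paper instead signals informally in the surrounding text.
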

\begin{proof}[Proof of Proposition \ref{prop1}]
	Note that the minimisers of $\|K(x_k^\delta-x^\dagger)\|^2$ and $\|K(x_k^\delta-x^\dagger)\|$ obviously coincide. We use the convention $\sum_{j=1}^0 = 0$. By monotonicity and Definition of $\kpr$ there holds for $k\ge \kpr$
	
	\begin{align*}
	\|K(x_k^\delta-x^\dagger)\|^2 &= \sum_{j=1}^k(y^\delta-y^\dagger,u_j)^2 + \sum_{j=k+1}^\infty(y^\dagger,u_j)^2 \ge \sum_{j=1}^k(y^\delta-y^\dagger,u_j)^2\\ &\ge \frac{1}{2}\left(\sum_{j=1}^\kpr(y^\delta-y^\dagger,u_j)^2 + \sum_{j=1}^\kpr(y^\delta-y^\dagger,u_j)^2\right)\ge \frac{1}{2}\left(\sum_{j=1}^\kpr(y^\delta-y^\dagger,u_j)^2 + \sum_{j=\kpr+1}^\infty(y^\dagger,u_j)^2\right)\\
	&=\frac{1}{2}\|K(x_\kpr^\delta-x^\dagger)\|^2\end{align*}

   and for $k\le \kpr-1$
   
   \begin{align*}
   \|K(x_k^\delta-x^\dagger)\|^2&=\sum_{j=1}^k(y^\delta-y^\dagger,u_j)^2 + \sum_{j=k+1}(y^\dagger,u_j)^2 \ge \sum_{j=k+1}^\infty(y^\dagger,u_j)^2 \ge\frac{1}{2}\left(\sum_{j=\kpr}^\infty(y^\dagger,u_j)^2 + \sum_{j=\kpr}^\infty(y^\dagger,u_j)^2\right)\\
    &\ge \frac{1}{2}\left(\sum_{j=1}^{\kpr-1}(y^\delta-y^\dagger,u_j)^2 + \sum_{j=\kpr}^\infty(y^\dagger,u_j)^2\right) = \frac{1}{2}\|K(x_{\kpr-1}^\delta-x^\dagger)\|^2,
   \end{align*}
	
	which proves the assertion for $\kpr$. The argumentation for the second assertion  is analogous.
	
\end{proof}

Note that $x_k^\delta$ and hence also $\kpr$ and $\kst$ are random quantities. In order to formulate our main results it is handy to have a deterministic quantity. We thus define

\begin{equation}\label{sec2:eq3}
\ekpr(x^\dagger):=\min\left\{k\in\N_0~:~\delta^2 k \ge \sum_{j=k+1}^\infty(y^\dagger,u_j)^2\right\}.
\end{equation}

Obviously $\ekpr$ is closely related to $\kpr$, since $k\delta^2=\E\left[ \sum_{j=1}^k(y^\delta-y^\dagger,u_j)^2\right]$ and thus $\ekpr$ balances the expected squared prediction error norm $\E\|K(x_k^\delta-x^\dagger)\|^2$.
We come to the first main result, a full oracle inequality for the prediction error. 

\begin{theorem}\label{th1}
	For all $\kappa\ge 3$ there holds
	
	$$\sup_{\substack{x^\dagger\in\mathcal{X}\\ \ekpr(x^\dagger)\ge \kappa}}\mathbb{P}\left(\|K(\xdp - x^\dagger)\| \le C_{\tau}\min_{k\in\N_0}\|K(x_k^\delta-x^\dagger)\|\right)\ge 1 - \max\left(\frac{12}{\tau^2+2\tau-3},9\right)\E\left[\left|\frac{1}{\kappa}\sum_{j=1}^{\lceil\frac{\kappa}{3}\rceil}\left((Z,u_j)^2-1\right)\right|\right]\to1,$$
	
	as $\kappa\to\infty$, with $C_{\tau}:=\sqrt{6}\sqrt{\frac{3}{2}\left(A_\tau+1\right)+B_\tau}$ and $A_\tau, B_\tau$ given below in Section \ref{sec:proofs}.
\end{theorem}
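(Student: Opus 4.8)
The plan is to turn the probabilistic oracle inequality into a deterministic estimate valid on a single high-probability noise event. Writing the variance as $V_k:=\sum_{j=1}^k(y^\delta-y^\dagger,u_j)^2$ (increasing in $k$) and the bias as $b_k:=\sum_{j>k}(y^\dagger,u_j)^2$ (decreasing in $k$), the decomposition recorded above together with Proposition \ref{prop1} identifies $\min_k\|K(x_k^\delta-x^\dagger)\|$, up to the factor $\sqrt2$, with the error at the random balance $\kpr$ (or $\kpr-1$). Hence it suffices to show, on a good event, that the oracle error from below and $\|K(\xdp-x^\dagger)\|$ from above are both comparable to $\delta^2\ekpr$; the deterministic $\ekpr$ is the natural pivot because $\E V_k=k\delta^2$. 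The good event $\Omega_\kappa$ will be the one on which the empirical noise energies $\sum_{j=1}^k((Z,u_j)^2-1)$ are small on the scales relevant to the problem.

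The bias of the data-driven index comes essentially for free. Since $\kdp=\max_m\kdp(m)$, for every discretisation level $m$ the discrepancy is already satisfied at level $\kdp$, i.e. $\sum_{j=\kdp+1}^m(y^\delta,u_j)^2\le\tau^2m\delta^2$; I would apply this with $m=\ekpr$, combine it with $(a+b)^2\ge\frac12a^2-b^2$ and the defining inequality $\sum_{j>\ekpr}(y^\dagger,u_j)^2\le\delta^2\ekpr$ to obtain $b_{\kdp}\lesssim\delta^2\ekpr$ with a $\tau$-dependent constant. For the oracle lower bound I would split according to whether $\kpr<\ekpr$ or $\kpr\ge\ekpr$: in the first case the minimality in the definition of $\ekpr$ yields $b_{\kpr-1}\ge b_{\ekpr-1}>\delta^2(\ekpr-1)$ deterministically, while in the second case $V_\kpr\ge V_\ekpr\approx\delta^2\ekpr$ on $\Omega_\kappa$; either way $\min_k\|K(x_k^\delta-x^\dagger)\|^2\gtrsim\delta^2\ekpr$.

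The crux is the variance of $\xdp$, which amounts to showing $\kdp\lesssim\ekpr$ on $\Omega_\kappa$. Fixing a target $k^*$ of order $\ekpr$, the event $\{\kdp>k^*\}$ forces $\sup_{m>k^*}\big[\sum_{j=k^*+1}^m(y^\delta,u_j)^2-\tau^2m\delta^2\big]>0$; expanding the square, the deterministic part of this supremum is bounded by $\sum_{j>k^*}(y^\dagger,u_j)^2-(\tau^2-1)m\delta^2-k^*\delta^2\le-(\tau^2-1)m\delta^2$, strictly negative with a drift growing linearly in $m$ precisely because $\tau>1$ and $k^*\gtrsim\ekpr$. The bad event therefore requires the driftless fluctuation $\delta^2\sum_{j=k^*+1}^m((Z,u_j)^2-1)$ (plus a bounded-variance martingale cross term, handled by Doob's inequality) to overcome this negative drift. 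This supremum over all $m$ is the main obstacle: a naive union bound with Chebyshev fails (the resulting series $\sum_m m^{-1}$ diverges, and Theorem \ref{th1} only assumes a finite second moment), so the negative drift must be exploited to collapse the supremum onto a single scale of order $\lceil\kappa/3\rceil$, after which Markov's inequality bounds the probability by $\E\big[\big|\frac1\kappa\sum_{j=1}^{\lceil\kappa/3\rceil}((Z,u_j)^2-1)\big|\big]$.

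Finally I would assemble the pieces: on $\Omega_\kappa$ the bounds combine to $\|K(\xdp-x^\dagger)\|^2=V_{\kdp}+b_{\kdp}\lesssim\delta^2\ekpr\lesssim\min_k\|K(x_k^\delta-x^\dagger)\|^2$, which is the claimed inequality with $C_\tau=\sqrt6\sqrt{\tfrac32(A_\tau+1)+B_\tau}$; here the factor $2$ inside originates from $(a+b)^2\le2a^2+2b^2$ and the factor $3$ from the scale comparison $\kdp\lesssim3\ekpr$, while $A_\tau,B_\tau$ collect the $\tau$-dependence of the two index bounds. The probability of $\Omega_\kappa^c$ is then controlled by Markov's inequality applied to that single average, producing the prefactor $\max\big(12/(\tau^2+2\tau-3),9\big)$, where the first entry reflects the discrepancy margin $\tau^2+2\tau-3=(\tau-1)(\tau+3)$ in the variance step and the second the factor-$3$ comparison; letting $\kappa\to\infty$ sends this average to $0$ by the $L^1$ law of large numbers, which gives the convergence to $1$.
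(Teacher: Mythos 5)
Your overall architecture matches the paper's: restrict to a good event on which the empirical noise energies concentrate, bound $\kdp$ and the bias at $\kdp$ from above, bound the oracle error from below, and assemble. (The paper pivots on the random weak oracle $\kpr$ rather than on the deterministic $\ekpr$, but on the good event these agree up to constants, so that difference is cosmetic; your bias bound via $m=\ekpr$ and your two-case oracle lower bound are both sound.) The difficulty lies in your central step, the bound $\kdp\lesssim\ekpr$, where there are two genuine problems. First, expanding the square $\sum_{j=k^*+1}^m(y^\delta,u_j)^2$ produces the cross term $2\delta\sum_{j=k^*+1}^m(y^\dagger,u_j)(Z,u_j)$, an additional stochastic process whose supremum over $m$ must be controlled; Doob's inequality does this, but only at the price of an extra additive term in the failure probability that is absent from the statement you are proving. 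The paper avoids the cross term entirely by comparing square roots via the triangle inequality, $\sqrt{\sum_{j}(y^\delta,u_j)^2}\le\sqrt{\sum_{j}(y^\delta-y^\dagger,u_j)^2}+\sqrt{\sum_{j}(y^\dagger,u_j)^2}$, so that only the pure noise quadratic form needs control; this is also why its margin is $\frac{(\tau+1)^2}{4}-1=\frac{\tau^2+2\tau-3}{4}$ rather than your $\tau^2-1$, and why the prefactor $\max\bigl(\frac{12}{\tau^2+2\tau-3},9\bigr)$ comes out exactly.

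Second, and more seriously, your treatment of the supremum over all discretisation levels is not a proof. You correctly note that a union bound with Chebyshev fails — indeed, since $Z$ is only assumed to have finite second moments, $(Z,u_j)^2-1$ is merely in $L^1$ and no variance-based argument is available at all — but ``the negative drift collapses the supremum onto a single scale'' is not a mechanism: the bad events at different scales $m$ involve different random variables, and the drift, however negative, does not reduce $\sup_{m\ge\kappa/3}$ to a single $m$ to which Markov's inequality could be applied. The missing ingredient is structural: the running averages $\frac1m\sum_{j=1}^m\bigl((Z,u_j)^2-1\bigr)$ form a reverse martingale, so the Kolmogorov--Doob maximal inequality gives $\mathbb{P}\bigl(\sup_{m\ge\lceil\kappa/3\rceil}\bigl|\frac1m\sum_{j=1}^m((Z,u_j)^2-1)\bigr|>\varepsilon\bigr)\le\frac1\varepsilon\,\E\bigl[\bigl|\frac{1}{\lceil\kappa/3\rceil}\sum_{j=1}^{\lceil\kappa/3\rceil}((Z,u_j)^2-1)\bigr|\bigr]$. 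This is Proposition \ref{prop2}, it is the only place where the supremum over infinitely many $m$ is tamed with $L^1$ information alone, and it is precisely the source of the expectation appearing in the theorem; without identifying it, the key probabilistic estimate remains unproved.
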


We want to point out the generality of the above result, which perfectly reflects the paradigm, that the discrepancy principle balances approximation and data propagation error in the image space in that it results, up to a constant, in the error of the weak balanced oracle. However, usually one is interested in the error in the strong norm. The result we obtain here will depend on the difference between the weak balanced oracle $\kpr$ and the strong balanced oracle $\kst$. We give an important relation between the two oracles in the next proposition.

\begin{proposition}\label{prop1a}
	It holds that $\kpr\le \kst$.
\end{proposition}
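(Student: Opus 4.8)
The plan is to exploit that both oracle levels are defined as the first index at which a monotonically increasing quantity overtakes a monotonically decreasing one. Indeed, in the defining inequality of $\kpr$ the left side $\sum_{j=1}^k(y^\delta-y^\dagger,u_j)^2$ is nondecreasing in $k$ while the right side $\sum_{j=k+1}^\infty(y^\dagger,u_j)^2$ is nonincreasing, and similarly for $\kst$; hence each condition, once satisfied, persists for all larger indices. Consequently, since $\kpr$ is by definition the smallest index at which its inequality holds, it suffices to verify that the pr-inequality already holds at the single level $k=\kst$, which then forces $\kpr\le\kst$. (If $\kst=\infty$, i.e. its defining set is empty, the claim is trivial, so I assume $\kst$ finite.)

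First I would dispose of the degenerate case $\kst=0$. Here the st-inequality at $k=0$ reads $0\ge\sum_{j=1}^\infty(x^\dagger,v_j)^2$, which forces $x^\dagger=0$; since $(y^\dagger,u_j)=\sigma_j(x^\dagger,v_j)$ this also gives $\sum_{j=1}^\infty(y^\dagger,u_j)^2=0$, so the pr-inequality at $k=0$ holds trivially and $\kpr=0=\kst$.

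For $\kst\ge1$ I would set $k=\kst$ and introduce the pivot factor $\sigma_k^2$, using the monotonicity $\sigma_1\ge\sigma_2\ge\cdots$. Since $\sigma_j\ge\sigma_k$ for $j\le k$, each summand in the data term obeys $(y^\delta-y^\dagger,u_j)^2\ge\sigma_k^2(y^\delta-y^\dagger,u_j)^2/\sigma_j^2$, so that $\sum_{j=1}^k(y^\delta-y^\dagger,u_j)^2\ge\sigma_k^2\sum_{j=1}^k(y^\delta-y^\dagger,u_j)^2/\sigma_j^2$. Since $\sigma_j\le\sigma_k$ for $j\ge k+1$ and $(y^\dagger,u_j)^2=\sigma_j^2(x^\dagger,v_j)^2$, the bias term obeys $\sum_{j=k+1}^\infty(y^\dagger,u_j)^2\le\sigma_k^2\sum_{j=k+1}^\infty(x^\dagger,v_j)^2$. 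Chaining these two estimates around the st-inequality at $k=\kst$ yields $\sum_{j=1}^k(y^\delta-y^\dagger,u_j)^2\ge\sigma_k^2\sum_{j=k+1}^\infty(x^\dagger,v_j)^2\ge\sum_{j=k+1}^\infty(y^\dagger,u_j)^2$, which is exactly the pr-inequality at $k=\kst$, completing the argument.

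The proof is short, and the only genuine idea is the choice of the common factor $\sigma_{\kst}^2$: it is simultaneously a lower bound for the singular values entering the data sum (indices $j\le\kst$) and an upper bound for those entering the bias sum (indices $j\ge\kst+1$), and this is precisely what converts the strong-norm balancing inequality into the weak-norm one. I expect the main, and only minor, obstacle to be the bookkeeping of the boundary situations—the empty-sum convention at $k=0$ and the degenerate signal $x^\dagger=0$—rather than any analytic difficulty.
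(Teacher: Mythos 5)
Your proof is correct and is essentially the paper's argument in mirror image: the paper pivots on $\sigma_{\kpr-1}^2$ to show the strong-norm balancing condition still fails at $k=\kpr-1$, while you pivot on $\sigma_{\kst}^2$ to show the weak-norm condition already holds at $k=\kst$, and the key step --- that the pivot singular value bounds the data-sum weights from below and the bias-sum weights from above --- is identical in both versions. Your explicit treatment of the boundary cases $\kst=0$ and $\kst=\infty$ is a harmless addition that the paper leaves implicit.
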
 
\begin{proof}[Proof of Proposition \ref{prop1a}]
	We have
\begin{align*}
\sum_{j=1}^{\kpr-1}\frac{(y^\delta-y^\dagger,u_j)^2}{\sigma_j^2} &\le \frac{\sum_{j=1}^{\kpr-1}(y^\delta-y^\dagger,u_j)^2}{\sigma_{\kpr-1}^2} < \frac{1}{\sigma_{\kpr-1}^2}\sum_{j=\kpr}^\infty(y^\dagger,u_j)^2=\frac{1}{\sigma_{\kpr-1}^2}\sum_{j=\kpr}^\infty \sigma_j^2(x^\dagger,v_j)^2\\
&\le \sum_{j=\kpr}^\infty(x^\dagger,v_j)^2,
\end{align*}

where we used the definition of $\kpr$ in the second step. By definition of $\kst$ it follows that $\kst>\kpr-1$, hence $\kst\ge \kpr$.

\end{proof}

Proposition \ref{prop1a} already indicates that the strong error of the discrepancy principle might be suboptimal in the case that $\kpr$ is substantially smaller than $\kst$. This is indeed the case. The next theorem shows that the error in strong norm depends on the difference between $\kpr$ and $\kst$.

\begin{theorem}\label{th2}
	For $\kappa\ge 3$ there holds

	\begin{align}
	&\sup_{\substack{x^\dagger\in\mathcal{X}\\ \ekpr(x^\dagger)\ge \kappa}}\mathbb{P}\left( \|\xdp - x^\dagger\|  \le  C_\tau \left(\min_{k\in\N} \|x_k^\delta - x^\dagger\| + \sqrt{\sum_{j=\kpr}^\kst (x^\dagger,v_j)^2}\right)\right)\\
	 \ge &1- \max\left(\frac{12}{\tau^2+2\tau-3},9\right)\E\left[\left|\frac{1}{\kappa}\sum_{j=1}^{\lceil\frac{\kappa}{3}\rceil}\left((Z,u_j)^2-1\right)\right|\right]\to 1
	\end{align}
	
	as $\kappa\to\infty$, with $C_\tau:=\sqrt{2}\max\left(\frac{\tau+1}{\tau-1}+1,1+\sqrt{\frac{3}{8}}(3\tau+1)\right)$.
\end{theorem}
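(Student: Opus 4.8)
The plan is to derive Theorem \ref{th2} from the prediction-error oracle inequality of Theorem \ref{th1} by working on the very same high-probability event (call it $\Omega$) and converting the control of $\kdp$ in the weak norm into a bound in the strong norm. On $\Omega$ I would start from the exact decomposition
$$\|\xdp - x^\dagger\|^2 = \sum_{j=1}^{\kdp}\frac{(y^\delta-y^\dagger,u_j)^2}{\sigma_j^2} + \sum_{j=\kdp+1}^\infty (x^\dagger,v_j)^2,$$
and treat variance and bias separately. The guiding principle is that the discrepancy principle targets the \emph{prediction} oracle $\kpr$, so its strong error should be comparable to $\|x_{\kpr}^\delta - x^\dagger\|$, which by $\kpr\le\kst$ (Proposition \ref{prop1a}) exceeds the strong oracle only by the bias accumulated on the band $[\kpr,\kst]$, i.e.\ precisely the extra term $\sqrt{\sum_{j=\kpr}^{\kst}(x^\dagger,v_j)^2}$.

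Concretely, the first step is to reuse the localisation of $\kdp$ produced inside the proof of Theorem \ref{th1}: on $\Omega$ one should have, up to the off-by-one and constant factors already present in Proposition \ref{prop1}, that $\kpr \le \kdp \le \kst$. The lower bound $\kdp \ge \kpr$ reflects that the discrepancy principle does not stop before the prediction bias has been matched by the noise; it makes the bias $\sum_{j=\kdp+1}^\infty (x^\dagger,v_j)^2 \le \sum_{j=\kpr+1}^\infty (x^\dagger,v_j)^2$ splittable at $\kst$ into the extra term plus $\sum_{j=\kst+1}^\infty (x^\dagger,v_j)^2 \le \|x_{\kst}^\delta - x^\dagger\|^2$. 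The upper bound $\kdp \le \kst$ is what I would use for the variance, since then $\sum_{j=1}^{\kdp}\sigma_j^{-2}(y^\delta-y^\dagger,u_j)^2 \le \sum_{j=1}^{\kst}\sigma_j^{-2}(y^\delta-y^\dagger,u_j)^2$ is dominated by the strong-oracle variance at $\kst$ (or $\kst-1$).

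I would then invoke Proposition \ref{prop1} to bound the strong error at $\kst$ and $\kst-1$ by $\sqrt2\,\min_{k}\|x_k^\delta-x^\dagger\|$, collecting the variance together with the $j>\kst$ bias into $\min_{k}\|x_k^\delta-x^\dagger\|$ and the $j\in[\kpr,\kst]$ bias into the extra term. Using $\sqrt{a+b}\le\sqrt a+\sqrt b$ and the $\tfrac{\tau+1}{\tau-1}$-type factor coming from the stopping rule then assembles the stated constant $C_\tau=\sqrt2\max\!\big(\tfrac{\tau+1}{\tau-1}+1,\,1+\sqrt{3/8}\,(3\tau+1)\big)$. Since every estimate is carried out on $\Omega$, the probability bound is inherited verbatim from Theorem \ref{th1}.

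The hard part will be the two-sided localisation of $\kdp$, and in particular the upper bound $\kdp\le\kst$ needed to tame the variance. Because of the $\sigma_j^{-2}$ weighting, the weak-norm control of Theorem \ref{th1} does \emph{not} by itself bound the strong variance: a $\kdp$ only slightly beyond $\kst$ can inflate $\sum_j\sigma_j^{-2}(\cdots)^2$ arbitrarily for severely ill-posed $K$, so one genuinely needs the order relation between $\kdp$, $\kpr$ and $\kst$ rather than a soft transfer from the prediction error. Making this localisation quantitative on $\Omega$, and absorbing the admissible overshoot (the factor $3\tau+1$, mirroring the $\lceil\kappa/3\rceil$ appearing in the probability bound) into $C_\tau$, is where the real work lies; the remaining manipulations are the same monotonicity-and-balancing arguments already used in Propositions \ref{prop1} and \ref{prop1a}.
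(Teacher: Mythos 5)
There is a genuine gap, and it sits exactly where you place ``the real work'': the two-sided localisation $\kpr\le\kdp\le\kst$ on the good event is not available and is in general false, so the proof cannot be completed along the route you describe. What the proof of Theorem \ref{th1} gives on $\Omega_\kappa$ is only $\kdp\le A_\tau\kpr$ with $A_\tau=\left(\frac{\tau+1}{\tau-1}\right)^2>1$; since $\kst$ may equal $\kpr$ or exceed it by less than the factor $A_\tau$, the upper bound $\kdp\le\kst$ can fail. Likewise the lower bound $\kdp\ge\kpr$ is not established anywhere and need not hold: the discrepancy principle may stop strictly before $\kpr$, and all one controls is the missed prediction bias $\sum_{j=\kdp+1}^{\kpr}(y^\dagger,u_j)^2\le B_\tau\kpr\delta^2$, not the ordering of the indices. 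Since, as you yourself correctly observe, an overshoot of $\kst$ by even a few indices can blow up the strong variance for severely ill-posed $K$, replacing $\kdp\le\kst$ by the available bound $\kdp\le A_\tau\kpr$ does not rescue the argument.

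The paper's proof avoids any such localisation. It splits into the cases $\kdp\ge\kst$ and $\kdp<\kst$. In the first case it does not bound $\kdp$ at all; instead it feeds $\kdp(m)\ge\kst$ back into the defining inequality of the discrepancy principle and obtains $\delta\sqrt{\kdp}/\sigma_{\kdp}\le\frac{2}{\tau-1}\bigl(\sum_{j=\kdp}^\infty(x^\dagger,v_j)^2\bigr)^{1/2}$ on $\Omega_\kappa$, so the strong variance at $\kdp$ is absorbed into the \emph{remaining strong bias}, which by monotonicity is at most $\sum_{j=\kst}^\infty(x^\dagger,v_j)^2$ and hence controlled by the oracle plus the band term $\sum_{j=\kpr}^{\kst}(x^\dagger,v_j)^2$. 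In the second case the variance is trivially at most the oracle variance at $\kst-1$, and the bias missed on the range between $\kdp$ and $\kpr-1$ is transported to the strong norm by dividing by $\sigma_{\kpr-1}$ and using the definition of $\kpr$; this is where the factor $\sqrt{3/8}\,(3\tau+1)$ and the extra band term actually arise. If you want to salvage your outline, replace ``localise $\kdp$'' by this self-referential use of the stopping rule in the overshoot case.
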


Under additional assumptions we can bound the probability in Theorem \ref{th1} and \ref{th2} explicitly.

\begin{proposition}\label{prop1b}
	Assume that for some $2<p\le 4$ the white noise has finite $p$-th moment, i.e. $\E|(Z,y)|^p=\gamma_p\|y\|<\infty$. Then
	
	$$\E\left[\left|\frac{1}{\kappa}\sum_{j=1}^\kappa((Z,u_j)^2 - 1)\right|\right] \le 2^{1-\frac{2}{p}} \left(\gamma_p + 1\right)^{\frac{2}{p}} \kappa^{\frac{2}{p}-1} .$$
	
\end{proposition}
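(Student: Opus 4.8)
The plan is to reduce the first absolute moment of the centred average to the $(p/2)$-th moment of the plain sum, and then to exploit the finite $p$-th moment termwise. Set $X_j := (Z,u_j)^2 - 1$ and $S_\kappa := \sum_{j=1}^\kappa X_j$. Assumptions (i) and (ii) give $\E[(Z,u_j)^2]=(u_j,u_j)=1$, so every $X_j$ is centred, $\E X_j = 0$; this mean-zero property is what produces the cancellation responsible for the decay in $\kappa$. Since $p/2>1$, Jensen's inequality applied to the convex map $t\mapsto|t|^{p/2}$ yields $\E|S_\kappa| \le \big(\E|S_\kappa|^{p/2}\big)^{2/p}$. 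Raising the claimed bound to the power $p/2$ shows that it suffices to establish the single estimate $\E|S_\kappa|^{p/2}\le 2^{\,p/2-1}(\gamma_p+1)\,\kappa$, after which the exponent $\kappa^{2/p-1}$ and the factor $2^{1-2/p}(\gamma_p+1)^{2/p}$ appear automatically upon taking the $2/p$-th power and dividing by $\kappa$.

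Two ingredients give that bound. First, a termwise moment estimate: by the elementary convexity inequality $|t-1|^{p/2}\le 2^{\,p/2-1}(|t|^{p/2}+1)$ (valid since $p/2\ge 1$) applied with $t=(Z,u_j)^2\ge 0$, together with assumption (iii) which gives $\E|(Z,u_j)|^{p}=\gamma_p\|u_j\|^{p}=\gamma_p$, one obtains $\E|X_j|^{p/2}\le 2^{\,p/2-1}(\gamma_p+1)$. Second, a concentration/summation step: using that the white-noise coefficients $(Z,u_j)$ are independent, a moment inequality of von Bahr--Esseen / Marcinkiewicz--Zygmund type for sums of independent centred random variables with exponent $1<p/2\le 2$ bounds $\E|S_\kappa|^{p/2}$ by a constant times $\sum_{j=1}^\kappa\E|X_j|^{p/2}\le 2^{\,p/2-1}(\gamma_p+1)\,\kappa$. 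Combining the two ingredients and feeding them into the Jensen reduction produces exactly the stated right-hand side.

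The main obstacle is this summation step and the precise constant it contributes. The textbook von Bahr--Esseen constant is $2$ uniformly in the exponent; it is sharp at $p/2=1$ but wasteful for $p/2$ close to $2$ (where orthogonality alone gives constant $1$), and would produce a constant slightly larger than $2^{1-2/p}$. Matching the stated value requires the sharp form of the inequality on $1<p/2\le 2$, most transparently obtained by the standard truncation argument that splits each $X_j$ at a suitable level into a bounded part (controlled in $L^2$ by orthogonality, hence with constant $1$) and a tail (controlled in $L^1$ via the $p$-th moment). I would also stress that independence is genuinely needed at this point: under the weaker hypotheses (i)--(iii) alone the squares $(Z,u_j)^2$ need not decorrelate---for an elliptically distributed $Z$ with identity covariance all coefficients share a common radial factor, so that $\tfrac1\kappa S_\kappa$ does not concentrate at all---so some independence (or at least second-order orthogonality of the $X_j$) must be invoked here.
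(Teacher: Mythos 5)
Your proof follows essentially the same route as the paper's: Jensen's inequality to pass to the $(p/2)$-th moment, a Marcinkiewicz--Zygmund/von Bahr--Esseen bound $\E|S_\kappa|^{p/2}\lesssim \kappa\,\E|X_1|^{p/2}$ for the independent centred sum, and the $c_r$-inequality $|t-1|^{p/2}\le 2^{p/2-1}\left(|t|^{p/2}+1\right)$ for the individual terms. The constant issue you raise is genuine but is not resolved in the paper either --- it cites Corollary 8.2 of Gut and applies it with constant $1$ --- so your concern about needing a ``sharp form'' applies verbatim to the published argument; in any case it affects only the numerical prefactor, not the rate $\kappa^{\frac{2}{p}-1}$.
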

\begin{proof}[Proof of Proposition \ref{prop1b}]
	By convexity and the Marcinkiewicz-Zygmund inequality (Corollary 8.2 in \cite{gut2013probability}) it holds that
	
	\begin{align*}
	\E\left[\left|\frac{1}{\kappa}\sum_{j=1}^\kappa((Z,u_j)^2 - 1)\right|\right] &\le \left(\E\left[\left|\frac{1}{\kappa}\sum_{j=1}^\kappa((Z,u_j)^2 - 1)\right|^\frac{p}{2}\right]\right)^\frac{2}{p}\le \left( \kappa^{1-\frac{p}{2}}\E\left|(Z,u_1)^2-1\right|^\frac{p}{2}\right)^\frac{2}{p}\\
	&\le \left(2^{\frac{p}{2}-1}(\gamma_p+1)1\right)^\frac{2}{p}\kappa^{\frac{2}{p}-1}\le 2^{1-\frac{2}{p}}(\gamma_p+1)^\frac{2}{p} \kappa^{\frac{2}{p}-1}.
	\end{align*}
	
\end{proof}

Unlike Theorem \ref{th1} the above Theorem \ref{th2} is not a proper oracle inequality, since the additional term is likely to dominate. In fact this perfectly illustrates in full generality one well-known drawback of the discrepancy principle, namely that it saturates. Note that this saturation is hidden when one considers convergence rates over source conditions for the discrepancy principle with spectral cut-off. For some operators we can show that the additional term will not dominate the total error, see e.g. Corollary \ref{cor1}, and we obtain a full oracle inequality for the strong error norm. The proofs of Theorem \ref{th1}, Theorem \ref{th2} and Corollary \ref{cor1} are deferred to Section \ref{sec:proofs}.

\section{Comparison to other parameter choice strategies}\label{sec3}

In this section we will compare our rule to two other parameter choice strategies, the balancing principle and the early-stopping discrepancy principle. We start with the balancing principle. This principle is based on ideas introduced by Lepski \cite{lepskii1991problem}. Lepski's method is originally formulated in a regression setting. A signal corrupted by white noise has to be recovered. This corresponds to our problem \eqref{in:eq1} with $K=Id$. We state the problem explicitly. Given noisy component measurements

\begin{equation}
 (y^\delta,u_j)=(y^\dagger,u_j) + \delta(Z,u_j),\quad j\in\N
\end{equation}

and the spectral cut-off estimators 

\begin{equation}
 y^\delta_k:=\sum_{j=1}^k(y^\delta,u_j) u_j
\end{equation}

find the truncation level $k\in\N$ such that the error $\|y^\delta_k-y^\dagger\|$ is optimal; in some sense. Lepski proposed the following rule 

\begin{equation}
 k_{Lep}^\delta:=\min\left\{k\in\N~:~\|y_m^\delta-y_k^\delta\|\le \kappa \sqrt{m}\delta,~\forall m>k\right\},
\end{equation}

with a fudge parameter $\kappa>1$. The idea was that $k$ should be large enough such that the differences would be dominated by the variance (note that $\E\|y_m^\delta-\E[y_m^\delta]\|^2 = \E\left[\sum_{j=1}^m(y^\delta-y^\dagger,u_j)^2\right] = m\delta^2$). In the above direct setting our approach \eqref{kmax1} and \eqref{kmax} would yield

\begin{equation}
k_{dp}^\delta:=\max_{m\in\N} k_{dp}^\delta(m) = \max_{m\in\N} \min\left\{0\le k \le m~:~{\sum_{j=k+1}^m(y^\delta,u_j)^2} \le \tau \sqrt{m}\delta\right\}.
\end{equation}

It is not hard to show that the two parameter choices actually coincide in this setting.

\begin{theorem}
	Let $\kappa=\tau$. Then $k_{Lep}^\delta=k_{dp}^\delta$ pointwise.
\end{theorem}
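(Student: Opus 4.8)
The plan is to rewrite both parameter choices in terms of the single family of residual tails $R(k,m):=\sum_{j=k+1}^m(y^\delta,u_j)^2$ (with the convention $R(k,m)=0$ for $m\le k$) and to exploit that, for fixed $m$, the map $k\mapsto R(k,m)$ is non-increasing on $\{0,\dots,m\}$. Since $\kappa=\tau$, both defining inequalities collapse, after squaring, to the \emph{same} threshold condition: $\|y_m^\delta-y_k^\delta\|\le\kappa\sqrt m\,\delta$ reads $R(k,m)\le\tau^2 m\delta^2$, and the discrepancy condition $\sqrt{R(k,m)}\le\tau\sqrt m\,\delta$ reads identically $R(k,m)\le\tau^2 m\delta^2$. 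This is the one place where the hypothesis $\kappa=\tau$ enters; with different constants the same scheme would yield only inequalities between the two levels.

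First I would record the key equivalence coming from monotonicity: for $0\le k<m$ one has $R(k,m)\le\tau^2 m\delta^2$ iff $k\ge k_{dp}^\delta(m)$, equivalently $R(k,m)>\tau^2 m\delta^2$ iff $k<k_{dp}^\delta(m)$, because $k_{dp}^\delta(m)$ is by definition the smallest index at which the non-increasing quantity $R(\cdot,m)$ first drops to or below the threshold. Next I would pass Lepski-admissibility through this equivalence. A level $k$ violates Lepski's condition precisely when there is some $m>k$ with $R(k,m)>\tau^2 m\delta^2$, i.e. with $k<k_{dp}^\delta(m)$. Conversely, whenever $k<k_{dp}^\delta(m)$ for some $m$, then automatically $m\ge k_{dp}^\delta(m)>k$, so that scale $m$ is itself a valid witness; hence the constraint $m>k$ in Lepski's rule is free and may be dropped.

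Combining these, $k$ fails Lepski's condition iff $k<k_{dp}^\delta(m)$ for some $m\in\N$, i.e. iff $k<\max_{m\in\N}k_{dp}^\delta(m)=k_{dp}^\delta$. Thus the set of Lepski-admissible levels is exactly $\{k:k\ge k_{dp}^\delta\}$, and taking the minimum gives $k_{Lep}^\delta=k_{dp}^\delta$. For the write-up I would still split this into the two inclusions. The bound $k_{Lep}^\delta\ge k_{dp}^\delta$ follows because $k_{Lep}^\delta$ is Lepski-admissible, so $R(k_{Lep}^\delta,m)\le\tau^2 m\delta^2$ for every $m>k_{Lep}^\delta$; were $k_{Lep}^\delta<k_{dp}^\delta=\max_m k_{dp}^\delta(m)$, some $m$ would satisfy $k_{Lep}^\delta<k_{dp}^\delta(m)$, forcing $R(k_{Lep}^\delta,m)>\tau^2 m\delta^2$ at a scale $m>k_{Lep}^\delta$, a contradiction. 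The reverse bound $k_{Lep}^\delta\le k_{dp}^\delta$ follows by verifying that $k_{dp}^\delta=\max_m k_{dp}^\delta(m)$ is itself Lepski-admissible, using $R(k_{dp}^\delta,m)\le R(k_{dp}^\delta(m),m)\le\tau^2 m\delta^2$ for $m>k_{dp}^\delta$ by monotonicity.

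The argument is essentially bookkeeping once the monotonicity equivalence is in place, so I do not anticipate a genuine obstacle; the delicate points are the boundary cases (empty sums, $k=0$, and the exact placement of strict versus non-strict inequalities in the two definitions) and, more substantively, the well-definedness of $k_{dp}^\delta=\max_{m}k_{dp}^\delta(m)$. Before the pointwise identity is meaningful one should confirm that this maximum is finite, equivalently that the admissible set is nonempty and bounded above; otherwise the equality must be read in the extended sense $+\infty=+\infty$, which the monotonicity characterisation also covers.
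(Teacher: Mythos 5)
Your proposal is correct and follows essentially the same route as the paper: both identify $\sum_{j=k+1}^m(y^\delta,u_j)^2=\|y_m^\delta-y_k^\delta\|^2$, use monotonicity of the residual in $k$ to characterise $k_{dp}^\delta(m)$ as a threshold, and establish the two inequalities $k_{dp}^\delta\le k_{Lep}^\delta$ and $k_{dp}^\delta\ge k_{Lep}^\delta$ (the paper via testing $k=k_{Lep}^\delta$ and $k=k_{Lep}^\delta-1$, you via the equivalent observation that the Lepski-admissible set is exactly $\{k:k\ge k_{dp}^\delta\}$). Your additional remarks on the redundancy of the constraint $m>k$ and on well-definedness of the maximum are sensible refinements of the same argument rather than a different approach.
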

	\begin{proof}
		
		We observe that
		
		$$\sum_{j=k+1}^m(y^\delta,u_j)^2 = \|y^\delta_m-y^\delta_k\|^2$$
		
		and hence
		
		$$k_{dp}^\delta= \max_{m\in\N}\left\{0\le k\le m~:~\|y^\delta_m-y^\delta_k\| \le \tau \sqrt{m}\delta \right\}.$$
		
		Now let $k=k^\delta_{Lep}$. By definition, for all $m>k$ it holds that $\|y^\delta_m-y_k^\delta\|\le\tau\sqrt{m}\delta$. We deduce $k_{dp}^\delta(m)\le k$ for all $m\in\N$ and thus $k_{dp}^\delta\le k_{Lep}^\delta$. Now let $k=k_{Lep}^\delta-1$. By definition there exists $m>k$ with $\|y_m^\delta-y_k^\delta\|> \tau \sqrt{m}\delta$. Thus $k_{dp}^\delta(m)>k$ and therefore $k_{dp}^\delta\ge k+1=k^\delta_{Lep}$. The proof is finished.
	\end{proof}

So we see that both methods coincide for direct regression problems, i.e., when $K=Id$. When a forward operator $K$ is involved, the balancing principle for inverse problems deduced from Lepski's method however follows a different approach. There, the estimators are not compared in the weak norm, but in the strong norm. The variance of the spectral cut-off estimator in the strong norm is

$$\E\|x_m^\delta-\E[x_m^\delta]\|^2 = \sum_{j=1}^m\frac{\E(y^\delta-y^\dagger,u_j)^2}{\sigma_j^2} = \delta^2\sum_{j=1}^m\frac{1}{\sigma_j^2}.$$

 I.e., we have

\begin{equation}\label{bal}
k_{bal}^\delta:=\min\left\{k\ge 0 ~:~\|x_m^\delta-x_k^\delta\| \le \kappa \delta^2 \sum_{j=1}^m\frac{1}{\sigma_j^2},~\forall m>k\right\}.
\end{equation}
	
Here the choice of $\kappa$ is delicate. The reason for this is the following. In the direct setting we have 

$$\|y^\delta_m-\E[y^\delta_m]\|^2-\E\|y^\delta-\E[y^\delta_m]\|^2 = \sum_{j=1}^m\left((y^\delta-y^\dagger,u_j)^2-1\right) = \delta^2\sum_{j=1}^m\left((Z,u_j)^2-1\right),$$

which is an i.i.d. sum of unbiased random variables and hence a reverse martingale, see the proof of Proposition \ref{prop2} in Section \ref{sec:proofs} below. Therefore its whole trajectory, i.e., $(\|y^\delta_m-\E[y^\delta_m]\|^2-\E\|y^\delta-\E[y^\delta_m]\|^2)_{m\in\N}$ can be controlled. Unlike this, in the case of inverse regression we have

$$\|x^\delta_m-\E[x^\delta_m]\|^2 - \E\|x_m^\delta - \E[x_m^\delta]\|^2 = \delta^2\sum_{j=1}^m\frac{(Z,u_j)^2-1}{\sigma_j^2},$$

which is just a sum with independent summands of increasing variance. In particular, depending on the behaviour of the singular values $\sigma_j$ the single last summand may give significant contribution to the whole sum or even dominate the whole sum. Consequently, one considers only finitely many estimators in the \eqref{bal}, i.e. $m\le D$ where $D$ depends on the noise level $\delta$ and the spectrum of $K$ and $\kappa$ will also depend on $D$ and therefore on $\delta$. Typically one sets $\kappa \sim \log(\delta)^{-1}$ and then obtains optimal convergence in $L^2$ up to a logarithmic correction for polynomially and exponentially ill-posed problems under Gaussian noise. While the concrete dependence of $\kappa$ on $\delta$ is usually tailored to obtain optimal convergence in $L^2$ for Gaussian noise, it is not directly clear whether a constant $\kappa$ could yield convergence in probability, cf. Theorem \ref{th1} and \ref{th2}. The following counter example shows that at least for exponentially ill-posed problems $\kappa$ has to depend on $\delta$.

\begin{example}\label{example}
	Let $\sigma_j^2 = e^{-j}$ and $x^\dagger=0$ and let $Z$ be Gaussian white noise. Assume that some $\kappa>1$ is fixed and set $m_\delta=\lceil \log(\delta^{-2})\rceil$. Let $k^\delta_{bal}$ be determined by the balancing principle \eqref{bal}. Then there exists $p_\kappa>0$ such that 
	
	\begin{equation}\label{ex}
	 \mathbb{P}\left(\|x^\delta_{k^\delta_{bal}} - x^\dagger\| \ge 1\right) \ge p_\kappa
	\end{equation}
	
	for all $0<\delta\le e^{-1}$. 
	
	We show that \eqref{ex} is fulfilled with 
	
	$$p_\kappa:= \mathbb{P}\left( |X|> e\kappa\right)$$
	
	where $X$ is a standard Gaussian. Indeed, we set 
	
	$$\Omega_\kappa:=\left\{|(Z,u_{m_\delta})|> e \kappa \right\}.$$
	
	By definition of $p_\kappa$ there holds $\mathbb{P}\left(\omd\right)=p_\kappa$.	It holds that 
	
	\begin{align*}
	\|x_{m_\delta}^\delta-x_{m_\delta-1}^\delta\|\omd &= \delta |(Z,u_{m_\delta})| e^\frac{m_\delta}{2}\omd > e \kappa \delta e^\frac{m_\delta}{2} \ge \kappa \delta \sqrt{\frac{e^{m_\delta+1}-1}{e-1}}  > \kappa \delta \sqrt{\sum_{j=1}^{m_\delta}e^j} 
		\end{align*}
	
	since $\frac{e^{m+1}-1}{e-1}\le e e^m$. Consequently we have 
	
	$$k^\delta_{bal}\omd \ge m_\delta \omd.$$
	
	Ultimately,
	
	\begin{align*}
	\|x^\delta_{k^\delta_{bal}} - x^\dagger\|\omd &\ge \frac{|(y^\delta,u_{m_\delta})|}{\sigma_{m_\delta}}\omd = \delta e^\frac{m_\delta}{2}|(Z,u_{m_\delta})|\omd> e\kappa \omd\ge 1\omd
	\end{align*}
	
	by definition of $m_\delta$, which finishes the proof of the assertion \eqref{ex}.
	
\end{example}

To put it in a nutshell, we saw that both methods are closely related to Lepski's method. The balancing principle has the advantage that the estimators are compared in the same norm in which one wants to have convergence, i.e., the strong norm. In case of the modified discrepancy principle the estimators are compared in weak norm. However, we have much worse control of the variance in the strong norm than in the weak norm, which makes the choice of the fudge parameter more delicate for the balancing principle and slightly deteriorates the rate. Finally, at the end of this section we discuss that the different viewpoint on Lepski's method provided by the modified discrepancy principle allows for another potential benefit related to early stopping. 

Note that a drawback of both methods is its high computational costs. A whole series of estimators needs to be computed and then compared to each other before the final choice can be made. A computational very attractive and still convergent method is the early stopping or sequential discrepancy principle \cite{blanchard2018early} which makes use of the computational simplicity of the plain discrepancy principle \cite{mathe2006discretized}.   Here, a maximal dimension $D$ is chosen first, respectively given by the measurement process. Then the classical discrepancy principle is applied, but with the parameter $\tau$ set to one

\begin{equation}
 k^\delta_{es}:=\min\left\{ 0\le k \le D~:~\sqrt{\sum_{j=k+1}^D (y^\delta,u_j)^2}\le \sqrt{D}\delta \right\}.
\end{equation}

The name sequential or early stopping refers to the fact that it takes the estimator which was computed last. Note that usually the singular value decomposition is unknown and has to be approximated numerically. In particular, calculations of later singular vectors are more costy and less accurate than calculations of the first ones. Thus a big advantage is due to the fact that in order to compute $k^\delta_{es}$ one just needs the first $k^\delta_{es}$ singular vectors and values. This can be seen at follows. In an application we usually have a discretised $K\in\R^{D\times D'}$ with $D,D'$ large and right hand side $y_D^\delta\in\R^D$. Expressing $y_D^\delta$ in the singular basis of $K$ then gives $y_D^\delta=\sum_{j=1}^{D}(y^\delta,u_j)u_j$ and thus $\sum_{j=k+1}^D(y^\delta,u_j)^2 = \|y_D^\delta\|^2 - \sum_{j=1}^k(y^\delta,u_j)^2$. Note hereby that the total norm $\|y_D^\delta\|^2$ can be computed without knowledge of singular vectors.
Similar to $k_{dp}^\delta$ also $k_{es}^\delta$ mimics the predictive oracle $k^\delta_{pr}$. In \cite{blanchard2018early} oracle inequalities for $k_{es}^\delta$ are proven. Due to the large variance in the residuals they do not hold for very smooth solutions. Also higher orders of the error distribution are needed, see e.g., Theorem 4.2 in \cite{10.1093/imanum/drab098}. We will see in Section \ref{sec5} that $k_{es}^\delta$ tends to be numerically less stable than the other two methods. Finally, there is an obvious way to combine the early stopping discrepancy principle with the modified discrepancy principle in that one can use the early discrepancy principle to determine a maximum $m$ over which we maximise in \eqref{kmax}. I.e. one could consider

\begin{equation}
 k^\delta_{com}:=\max_{m\le k_{es}^\delta} k_{dp}^\delta(m)
\end{equation}

or, more generally and to avoid the instabilities of $k_{es}^\delta$ one could set for $\tau>\tau_{min}\ge 1$

\begin{equation}
 k^\delta_{com}:=\max_{m\le m_{max}}\min\left\{0\le k\le m~:~\sqrt{\sum_{j=k+1}^m(y^\delta,u_j)^2}\le \tau \sqrt{m}\delta \right\}
\end{equation}

with 

\begin{equation}
m_{max}=\min\left\{0\le k \le D~:~\sqrt{\sum_{j=k+1}^D (y^\delta,u_j)^2}\le \tau_{min} \sqrt{D}\delta\right\}.
\end{equation}

\section{Proofs}\label{sec:proofs}
A central tool for the proofs will be the following proposition already used in \cite{jahn2021optimal}, which allows to control the measurement error.

\begin{proposition}\label{prop2}
	For any $\epsilon>0$ and $\kappa\in\N$ there holds
	
	$$\mathbb{P}\left(\left|\sum_{j=1}^m(y^\delta-y^\dagger,u_j)^2-m\delta^2\right| \ge \varepsilon m\delta^2,\quad\forall m\ge \kappa\right) \le\frac{1}{\varepsilon} \E\left[\left|\frac{1}{\kappa}\sum_{j=1}^\kappa\left((Z,u_j)^2-1\right) \right|\right] \to 0$$
	
	as $\kappa\to\infty$.
\end{proposition}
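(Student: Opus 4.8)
The plan is to reduce the statement to a maximal inequality for the normalized partial sums of the i.i.d. sequence $X_j := (Z,u_j)^2 - 1$ and then to exploit that these normalized sums form a reverse martingale. First I would rewrite the deviation: since $y^\delta - y^\dagger = \delta Z$ we have $(y^\delta-y^\dagger,u_j)=\delta(Z,u_j)$, so that $\sum_{j=1}^m(y^\delta-y^\dagger,u_j)^2 - m\delta^2 = \delta^2\sum_{j=1}^m X_j$. Dividing by $m\delta^2$ shows that the event in the proposition is precisely $\{\sup_{m\ge\kappa}|M_m|\ge\varepsilon\}$ with $M_m:=\frac1m\sum_{j=1}^m X_j$. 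By property (ii) one has $\E[(Z,u_j)^2]=\|u_j\|^2=1$, so $\E[X_j]=0$ and $\E|X_j|\le\E[(Z,u_j)^2]+1=2<\infty$; by property (iii) (with $\|u_j\|=\|u_k\|=1$) the $(Z,u_j)$, and hence the $X_j$, are identically distributed, and they are independent in the white-noise model, so $(X_j)$ is i.i.d., centered and integrable.

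Next I would establish the reverse-martingale structure. Put $S_m:=\sum_{j=1}^m X_j$ and let $\mathcal{G}_m:=\sigma(S_m,X_{m+1},X_{m+2},\dots)$, a decreasing family of $\sigma$-algebras. Since $(X_1,\dots,X_m)$ is exchangeable given $\mathcal{G}_m$, symmetry gives $\E[X_i\mid\mathcal{G}_m]=S_m/m$ for each $i\le m$, and in particular $\E[M_m\mid\mathcal{G}_{m+1}]=\frac1m\sum_{i=1}^m\E[X_i\mid\mathcal{G}_{m+1}]=\frac1m\cdot m\cdot\frac{S_{m+1}}{m+1}=M_{m+1}$. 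Hence $(M_m)_{m\ge\kappa}$ is a reverse martingale and $(|M_m|)_{m\ge\kappa}$ a reverse submartingale. I would then apply Doob's maximal inequality in reversed time: for fixed $N\ge\kappa$ set $Y_n:=M_{N-n}$ with the increasing filtration $\mathcal{F}_n:=\mathcal{G}_{N-n}$ for $0\le n\le N-\kappa$; then $(Y_n)$ is an ordinary martingale, $(|Y_n|)$ a submartingale, and $Y_{N-\kappa}=M_\kappa$. Doob's $L^1$ maximal inequality gives
\[
\varepsilon\,\mathbb{P}\!\left(\max_{\kappa\le m\le N}|M_m|\ge\varepsilon\right)=\varepsilon\,\mathbb{P}\!\left(\max_{0\le n\le N-\kappa}|Y_n|\ge\varepsilon\right)\le\E[|Y_{N-\kappa}|]=\E[|M_\kappa|].
\]
Letting $N\to\infty$, the events increase to $\{\sup_{m\ge\kappa}|M_m|\ge\varepsilon\}$, so by monotone convergence $\mathbb{P}(\sup_{m\ge\kappa}|M_m|\ge\varepsilon)\le\frac1\varepsilon\E[|M_\kappa|]$, which is exactly the claimed bound with $M_\kappa=\frac1\kappa\sum_{j=1}^\kappa((Z,u_j)^2-1)$.

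For the convergence to zero I would invoke the $L^1$ strong law of large numbers: as the $X_j$ are i.i.d., centered and integrable, $M_\kappa\to 0$ almost surely and in $L^1$ (equivalently, the reverse-martingale convergence theorem identifies the limit with $\E[X_1\mid\mathcal{G}_\infty]=\E[X_1]=0$, the tail $\sigma$-algebra $\mathcal{G}_\infty=\bigcap_m\mathcal{G}_m$ being trivial), whence $\E|M_\kappa|\to 0$ as $\kappa\to\infty$.

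The main obstacle is the correct bookkeeping of the reverse-martingale filtration and the time-reversal needed to pass from Doob's (forward) inequality on finite index ranges to the supremum over the infinite tail $m\ge\kappa$; the algebraic reduction and the $L^1$-SLLN step are routine. I would also note that the ``$\forall m\ge\kappa$'' under the probability is to be read as the supremum over $m\ge\kappa$, i.e.\ the event that the deviation bound is violated for \emph{some} such $m$, which is precisely the quantity the reverse-martingale maximal inequality controls.
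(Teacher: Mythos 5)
Your proof is correct and follows essentially the same route as the paper: identify $X_j=(Z,u_j)^2-1$ as i.i.d., centered and integrable, recognize the sample means as a reverse martingale, apply the Kolmogorov--Doob maximal inequality, and conclude the expectation tends to zero by the law of large numbers together with uniform integrability of reverse martingales. Your reading of the ``$\forall m\ge\kappa$'' as the supremum event matches the paper's own usage, and your extra detail on the filtration and time reversal only fills in what the paper delegates to a textbook reference.
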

\begin{proof}[Proof of Proposition \ref{prop2}]
	We give a short proof in order to keep the paper self-contained; see Proposition 3.1 in \cite{jahn2021optimal} for more details. It holds that
	
	$$\mathbb{P}\left(\left|\sum_{j=1}^m(y^\delta-y^\dagger,u_j)^2-m\delta^2\right| \le \varepsilon m\delta^2,\quad\forall m\ge \kappa\right)= \mathbb{P}\left(\sup_{m\ge \kappa} \left|\frac{1}{m}\sum_{j=1}^m X_j\right|\le \varepsilon\right),$$
	
	where $X_j:=\frac{(y^\delta-y^\dagger,u_j)^2}{\delta^2}-1 = (Z,u_j)^2-1$. Note that the right hand side of the above equation is independent of the noise level $\delta$. Since $(X_j)_{j\in\N}$ is i.i.d with $\E[X_1]=0$ and $\E|X_1|\le 2$ the sample mean $\left(\frac{1}{m}\sum_{j=1}^mX_j\right)_{m\in\N}$ is a reverse martingale. Thus the Kolmogorov-Doob-inequality (Theorem 16.2 in \cite{gut2013probability}) yields
	
	$$\mathbb{P}\left(\sup_{m\ge \kappa}\left|\frac{1}{m}\sum_{j=1}^m X_j\right|> \varepsilon\right)\le \frac{1}{\varepsilon} \E\left[\left|\frac{1}{\kappa}\sum_{j=1}^\kappa   X_j\right|\right]\to 0$$
	as $\kappa\to\infty$, by the law of large numbers and the fact that reverse martingales are uniformly integrable. 
\end{proof}

Now we can prove the main results. So let $x^\dagger$ be given with $\ekpr(x^\dagger)\ge \kappa$. We suppress the dependence on $x^\dagger$ of the balancing index $\ekpr(x^\dagger)$ in the following.	We carry out the analysis on the following sequence of events on which we have perfect control of the measurement error
	
	\begin{equation}\label{th1eq0}
	\Omega_\kappa:=\left\{\left|\sum_{j=1}^m(y^\delta-y^\dagger,u_j)^2-m\delta^2\right| \le \min\left(\frac{(\tau+1)^2}{4}-1,\frac{1}{3}\right) m \delta^2,\quad \forall m\ge \frac{\kappa}{3} \right\}.
	\end{equation}
	
	 By Proposition \ref{prop2} there holds 
	 
	 \begin{equation}\label{th1:eq0a}
	 \mathbb{P}\left(\Omega_\kappa\right) \ge 1-\max\left(\frac{12}{\tau^2+2\tau-3},9\right)\E\left[\left|\frac{1}{\kappa}\sum_{j=1}^{\left\lceil\frac{\kappa}{3}\right\rceil}\left((Z,u_j)^2-1\right)\right|\right].\end{equation}
	 
	 Obviously on $\Omega_\kappa$ we have
	 
	 \begin{align*}
	\frac{2}{3}m\delta^2\omd \le&\sum_{j=1}^m(y^\delta-y^\dagger,u_j)^2\omd \le \frac{4}{3}m\delta^2\omd,\\
	 	  &\sqrt{\sum_{j=1}^m(y^\delta-y^\dagger,u_j)^2}\omd \le\frac{\tau+1}{2}\sqrt{m}\delta\omd.
	 \end{align*}
	 
	 We first show that 
	 
	 \begin{equation}\label{th1:eq0}
	 (\kpr-1) \omd \ge \frac{\kappa}{3}\omd
	 \end{equation}
	  for all $\kappa\ge 3$. Indeed, since $\lceil\frac{\ekpr}{3}\rceil\ge \frac{\kappa}{3}$ by definition of $\Omega_\kappa$ we have that
	  
	  \begin{align*}
	  \sum_{j=1}^{\left\lceil\frac{\ekpr}{3}\right\rceil}(y^\delta-y^\dagger,u_j)^2 \omd &\le \frac{4}{3}\left\lceil \frac{\ekpr}{3}\right\rceil \delta^2\le (\ekpr-1) \delta^2 < \sum_{j=\ekpr}^\infty(y^\dagger,u_j)^2 < \sum_{j=\left\lceil \frac{\ekpr}{3}\right\rceil+1}^\infty(y^\dagger,u_j)^2
	  \end{align*}
	  
	  and thus $\kpr\omd>\left\lceil\frac{\ekpr}{3}\right\rceil\omd$ by definition, which proves the claim \eqref{th1:eq0}.
	 
	 \begin{proof}[Proof of Theorem \ref{th1}]
	We begin with an upper bound for $\kdp$. For $A_\tau = \left(\frac{\tau+1}{\tau-1}\right)^2$  there holds
	
	\begin{equation}\label{th1eq1}
	\kdp \omd\le A_\tau\kpr .
	\end{equation}
	To prove \eqref{th1eq1} we have to show that $\kdp(m)\le A_\tau \kpr$ for all $m\in\N$. This is clearly true for all $m\in\N$ with $\kdp(m)\le \kpr$. Now consider $m\in\N$ with $\kdp(m)>\kpr$.  By the defining relation of the discrepancy principle and the definition of $\kpr$ and $\omk$ there holds
	
	\begin{align*}
	\tau \sqrt{m} \delta \omd &\le  \sqrt{\sum_{j=\kdp(m)}^m(y^\delta,u_j)^2} \omd \le \sqrt{\sum_{j=\kdp(m)}^m(y^\delta-y^\dagger,u_j)^2}\omd + \sqrt{\sum_{j=\kdp(m)}^m(y^\dagger,u_j)^2}\omd\\
	&\le \frac{\tau+1}{2}\sqrt{m}\delta + \sqrt{\sum_{j=\kpr+1}^m(y^\dagger,u_j)^2}\le \frac{\tau+1}{2}\sqrt{m}\delta + \sqrt{\sum_{j=1}^{\kpr}(y^\delta-y^\dagger,u_j)^2}\omd\\
	&\le \frac{\tau+1}{2}\sqrt{m}\delta + \frac{\tau+1}{2}\sqrt{\kpr}\delta.
	\end{align*}
	
	We obtain 
	
	$$\sqrt{m}\delta \omd \le \frac{\tau+1}{\tau-1} \sqrt{\kpr}\delta = \sqrt{ A_\tau\kpr}\delta.$$
	
	This together with the obvious inequality $\kdp(m)\le m$ finishes the proof of the claim \eqref{th1eq1}.
	
	This shows stability of the method in the sense that we do not stop too late. To finish the proof we have to show that we do not stop too early. We claim that
	
	\begin{equation}\label{th1eq2}
	\sum_{j=\kdp+1}^{\kpr}(y^\dagger,u_j)^2 \omd\le B_\tau \kpr \delta^2
	\end{equation}
	
	with $B_\tau = \frac{(3\tau+1)^2}{4}$. To prove \eqref{th1eq2} let us consider $m=\kpr$. Then, by definition of the discrepancy principle,
	
	\begin{align*}
	\tau \sqrt{\kpr}\delta\omd &\ge \sqrt{\sum_{j=\kdp(\kpr)+1}^{\kpr} (y^\delta,u_j)^2}\omd \ge  \sqrt{\sum_{j=\kdp(\kpr)+1}^{\kpr}(y^\dagger,u_j)^2}\omd -\sqrt{\sum_{j=\kdp(\kpr)+1}^{\kpr} (y^\delta-y^\dagger,u_j)^2}\omd \\
	&\ge \sqrt{\sum_{j=\kdp(\kpr)+1}^{\kpr}(y^\dagger,u_j)^2}\omd - \frac{\tau+1}{2}\sqrt{\kpr}\delta.
	\end{align*}
	
	By monotonicity and with the convention $\sum_{j=m}^n=0$ for $m> n$ we obtain
	
	\begin{equation*}
	\sum_{j=\kdp+1}^{\kpr}(y^\dagger,u_j)^2 \omd\le \sum_{j=\kdp(\kpr)+1}^{\kpr}(y^\dagger,u_j) \omd\le \frac{(3\tau+1)^2}{4} \kpr \delta^2 = B_\tau \kpr \delta^2
	\end{equation*}
	
	and \eqref{th1eq2} is proven. We combine the two estimates \eqref{th1eq1} and \eqref{th1eq2}
	
	\begin{align}\notag
	\|K(\xdp-x^\dagger)\|^2\omd &= \sum_{j=1}^{\kdp}(y^\delta-y^\dagger,u_j)^2\omd + \sum_{j=\kdp+1}^\infty(y^\dagger,u_j)^2\omd\\\notag
	&= \sum_{j=1}^{\kdp}(y^\delta-y^\dagger,u_j)^2\omd  +\sum_{j=\kdp+1}^{\kpr}(y^\dagger,u_j)^2 \omd+ \sum_{j=\kpr+1}^\infty(y^\dagger,u_j)^2\omd\\\notag
	&\le \sum_{j=1}^{A_\tau \kpr}(y^\delta-y^\dagger,u_j)^2 \omd + B_\tau \kpr \delta^2\omd + \sum_{j=1}^\kpr(y^\delta-y^\dagger,u_j)^2\omd\\\label{th1:eq3}
	&\le \left(\frac{3}{2}\left(A_\tau+1\right)+B_\tau\right)\kpr \delta^2\omd.
	\end{align}
	
On the other hand we have

\begin{align*}
\|K(x_{\kpr-1}^\delta-x^\dagger)\|^2\omd &=\sum_{j=1}^{\kpr-1}(y^\delta-y^\dagger,u_j)^2\omd + \sum_{j=\kpr}^\infty(y^\dagger,u_j)^2\omd\\
                                         &\ge \frac{2}{3}(\kpr-1)\delta^2\omd\ge \frac{1}{3}\kpr \omd.
\end{align*}

and also

\begin{align*}
\|K(x^\delta_\kpr-x^\dagger)\|^2  &=\sum_{j=1}^{\kpr}(y^\delta-y^\dagger,u_j)^2\omd + \sum_{j=\kpr+1}^\infty(y^\dagger,u_j)^2\omd\\
                                   &\ge \frac{2}{3}\kpr \delta^2\omd.
\end{align*}

	Therefore Proposition \ref{prop1} gives
	
	$$\min_{k\in\N}\|K(x_k^\delta-x^\dagger)\|\omd\ge \frac{1}{\sqrt{2}}\min\left(\|K(x_\kpr^\delta - x^\dagger)\|,\|K(x_{\kpr-1}^\delta-x^\dagger)\|\right)\omd \ge \frac{1}{\sqrt{6}}\sqrt{\kpr}\delta\omd.$$

	Together with \eqref{th1:eq3} we obtain
	
	$$\|K(\xdp-x^\dagger)\|\omd \le \sqrt{\frac{3}{2}\left(A_\tau+1\right)+B_\tau}\sqrt{\kpr}\delta\omd \le C_\tau \min_{k\ge \N_0}\|K(x_k^\delta-x^\dagger)\|$$
	
	where
	
	\begin{equation}\label{th1eq3}
	C_\tau:=\sqrt{6}\sqrt{\frac{3}{2}\left(A_\tau+1\right)+B_\tau}.
	\end{equation}
	
	Theorem \ref{th1} then follows with \eqref{th1:eq0a}.
	
\end{proof}

\begin{proof}[Proof of Theorem \ref{th2}]
	We again resort on the event $\Omega_\kappa$ from \eqref{th1eq0}. Note that $\kst\omd\ge \kpr\omd\ge 3\kappa$ by Proposition \ref{prop1a}. We start by showing stability of the approach. We claim that

	\begin{equation}\label{th2:eq1}
	\sqrt{\sum_{j=1}^{\kdp}\frac{(y^\delta-y^\dagger,u_j)^2}{\sigma_j^2}}\chi_{\Omega_\kappa}\chi_{\left\{\kdp\ge \kst\right\}}\le \frac{\tau+1}{\tau-1}\sum_{j=\kdp}^\infty(x^\dagger,v_j)^2\omd \chi_{\left\{\kdp\ge \kst\right\}}.
	\end{equation}

	 So assume that $m\ge \kst$ with $k_{dp}^\delta(m)\ge \kst$. Then we have
	
	\begin{align*}
	\tau \sqrt{m}\delta \chi_{\Omega_\kappa} &< \sqrt{\sum_{j=\kdp(m)}^m(y^\delta,u_j)^2} \le \sqrt{\sum_{j=\kdp(m)}^m(y^\delta-y^\dagger,u_j)^2}\chi_{\Omega_\kappa} + \sqrt{\sum_{j=\kdp(m)}^m(y^\dagger,u_j)^2}\\
	&\le \frac{\tau+1}{2}\sqrt{m}\delta + \sigma_{\kdp(m)}\sqrt{\sum_{j=\kdp(m)}^\infty(x^\dagger,v_j)^2}\\
	\Longrightarrow \frac{\sqrt{m}}{\sigma_{\kdp(m)}}\omd &\le \frac{2}{\tau-1} \sqrt{\sum_{j=\kdp(m)}^\infty(x^\dagger,v_j)^2}.
	\end{align*}
	
	Since obviously $\kdp(m)\le m$ we conclude that
	
	\begin{equation}\label{th2:eq2}
	\frac{\sqrt{\kdp}}{\sigma_{\kdp}} \omd \le\frac{2}{\tau-1}\sqrt{\sum_{j=\kdp}^\infty(x^\dagger,v_j)^2},
	\end{equation}
	
	whenever $\kdp\ge \kst$. Then we have $\kdp\omd\ge\kappa\omd>\frac{\kappa}{3}\omd$ and by \eqref{th2:eq2} it follows that
	
	\begin{align*}
	\sqrt{\sum_{j=1}^{\kdp}\frac{(y^\delta-y^\dagger,u_j)^2}{\sigma_j^2}} \omd &\le \frac{1}{\sigma_{\kdp}} \sqrt{\sum_{j=1}^{\kdp}(y^\delta-y^\dagger,u_j)^2}\omd \le \frac{\tau+1}{2} \frac{\delta\sqrt{\kdp}}{\sigma_{\kdp}}\omd
	\le \frac{\tau+1}{\tau-1}\sqrt{\sum_{j=\kdp}^\infty(x^\dagger,v_j)^2}.
	\end{align*}
	
  This proves the assertion \eqref{th2:eq1}. We come to the approximation error. It holds that
	
	\begin{align*}
	\sqrt{\sum_{j=\kdp+1}^{\kpr-1}(x^\dagger,v_j)^2} \omd &\le \frac{1}{\sigma_{\kpr-1}}\sqrt{\sum_{j=\kdp+1}^{\kpr-1}(y^\dagger,u_j)^2}\omd \le \frac{1}{\sigma_{\kpr-1}} \left(\sqrt{\sum_{j=\kdp+1}^{\kpr-1}(y^\delta,u_j)^2}+\sqrt{\sum_{j=\kdp+1}^{\kpr-1}(y^\delta-y^\dagger,u_j)^2}\right)\omd\\
	&\le \frac{3\tau+1}{2} \frac{\delta\sqrt{\kpr-1}}{\sigma_{\kpr-1}}\omd\le \sqrt{\frac{3}{8}}(3\tau+1) \frac{\sqrt{\sum_{j=1}^{\kpr-1}(y^\delta-y^\dagger,u_j)^2}}{\sigma_{\kpr-1}}\\
	&\le \sqrt{\frac{3}{8}}(3\tau+1) \frac{\sqrt{\sum_{j=\kpr}^\infty(y^\dagger,u_j)^2}}{\sigma_{\kpr-1}}\le \sqrt{\frac{3}{8}}(3\tau+1)\sqrt{\sum_{j=\kpr}^\infty(x^\dagger,v_j)^2}
	\end{align*}
	
	and consequently
	
	\begin{align}\notag
	\sqrt{\sum_{j=\kdp+1}^\infty(x^\dagger,v_j)^2}\omd &\le \sqrt{\sum_{j=\kdp+1}^{\kpr-1}(x^\dagger,v_j)^2}\omd+\sqrt{\sum_{j=\kpr}^\infty(x^\dagger,v_j)^2}\\\label{th2:eq3}
	&\le\sqrt{\frac{3}{8}}(3\tau+1)\sqrt{\sum_{j=\kpr}^\infty(x^\dagger,v_j)^2} \le \sqrt{\frac{3}{8}}(3\tau+1)\left(\sqrt{\sum_{j=\kpr}^{\kst}(x^\dagger,v_j)^2}+\sqrt{\sum_{j=\kst+1}^\infty(x^\dagger,v_j)^2}\right).
	\end{align}
	
	We put the both estimations together to finish the proof. Note that it is easy to see from Proposition \ref{prop1} that 
	
	$$\min_{k\in\N_0}\|x_k^\delta-x^\dagger\|^2 \ge \frac{1}{2}\max\left(\sum_{j=1}^{\kst-1}\frac{(y^\delta-y^\dagger,u_j)^2}{\sigma_j^2},\sum_{j=\kst+1}^\infty(x^\dagger,v_j)^2\right).$$
	
	We treat the cases $\kdp<\kst$ and $\kdp\ge \kst$ separately. First by \eqref{th2:eq1}
	
	\begin{align*}
	\|\xdp-x^\dagger\|\omd\chi_{\{\kdp\ge \kst\}} &\le \left(\sqrt{\sum_{j=1}^{\kdp}\frac{(y^\delta-y^\dagger,u_j)^2}{\sigma_j^2}} +\sqrt{\sum_{j=\kdp+1}^\infty(x^\dagger,v_j)^2}\right)\omd\chi_{\{\kdp\ge \kst\}}\\
	&\le\left(\frac{\tau+1}{\tau-1} \sqrt{\sum_{j=\kdp}^\infty(x^\dagger,v_j)^2}+\sqrt{\sum_{j=\kdp+1}^\infty(x^\dagger,v_j)^2}\right)\omd\chi_{\{\kdp\ge \kst\}}\\
	&\le \left(\frac{\tau+1}{\tau-1}+1\right)\sqrt{\sum_{j=\kst}^\infty(x^\dagger,v_j)^2}\le \left(\frac{\tau+1}{\tau-1}+1\right)\left(\sqrt{(x^\dagger,v_\kst)^2}+\sqrt{\sum_{j=\kst+1}^\infty(x^\dagger,v_j)^2}\right)\\
	&\le \sqrt{2}\left(\frac{\tau+1}{\tau-1}+1\right)\left(\min_{k\in\N_0}\|x_k^\delta-x^\dagger\|+\sqrt{\sum_{j=\kpr}^\kst(x^\dagger,v_j)^2}\right).
	\end{align*}
	
	Second, by \eqref{th2:eq3}
	
	\begin{align*}
		\|\xdp-x^\dagger\|\omd\chi_{\{\kdp< \kst\}} &\le \left(\sqrt{\sum_{j=1}^{\kdp}\frac{(y^\delta-y^\dagger,u_j)^2}{\sigma_j^2}} +\sqrt{\sum_{j=\kdp+1}^\infty(x^\dagger,v_j)^2}\right)\omd\chi_{\{\kdp< \kst\}}\\
		&\le\left( \sqrt{\sum_{j=1}^{\kst-1}\frac{(y^\delta-y^\dagger,u_j)^2}{\sigma_j^2}}+\sqrt{\sum_{j=\kdp+1}^\infty(x^\dagger,v_j)^2}\right)\omd\\
		&\le \sqrt{2}\min_{k\in\N_0}\|x_k^\delta-x^\dagger\| + \sqrt{\frac{3}{8}}(3\tau+1)\left(\sqrt{\sum_{j=\kpr}^\kst(x^\dagger,v_j)^2}+\sqrt{\sum_{j=\kst+1}^\infty(x^\dagger,v_j)^2}\right)\\
		&\le\sqrt{2}\left(1+\sqrt{\frac{3}{8}}(3\tau+1)\right)\left(\min_{k\in\N_0}\|x^\delta_k-x^\dagger\| + \sqrt{\sum_{j=\kpr}^\kst(x^\dagger,v_j)^2}\right).
	\end{align*}
	
	Thus the proof of Theorem \ref{th2} is finished with
	
	$$C_\tau:=\sqrt{2}\max\left(\frac{\tau+1}{\tau-1}+1,1+\sqrt{\frac{3}{8}}(3\tau+1)\right).$$
	
\end{proof}

\begin{proof}[Proof of Corollary \ref{cor1}]
	
	This time we treat the cases $\kdp\le \kst$ and $\kdp>\kst$ separately. In the latter case, we obtain with \eqref{th2:eq1} that
	
	\begin{align*}
	\|\xdp-x^\dagger\|\omd\chi_{\{\kdp>\kst\}} &\le \left(\frac{\tau+1}{\tau-1}\sqrt{\sum_{j=\kdp}^\infty(x^\dagger,v_j)^2} + \sqrt{\sum_{j=\kdp+1}^\infty(x^\dagger,v_j)^2}\right)\omd\chi_{\{\kdp>\kst\}}\\
	&\le \left(\frac{\tau+1}{\tau-1}+1\right)\sqrt{\sum_{j=\kst+1}^\infty(x^\dagger,v_j)^2}\le \sqrt{2}\left(\frac{\tau+1}{\tau-1}+1\right)\min_{k\in\N_0}\|x_k^\delta-x^\dagger\|.
	\end{align*}
	
	For the former case, note that
	
	\begin{align*}
	\sqrt{\sum_{j=\kdp+1}^\kst(x^\dagger,v_j)^2}\omd&\le \frac{1}{\sigma_\kst}\left(\sqrt{\sum_{j=\kdp(\kst)+1}^\kst(y^\delta,u_j)^2}+\sqrt{\sum_{j=\kdp+1}^\kst(y^\delta-y^\dagger,u_j)^2}\right)\omd\\
	&\le \frac{1}{\sigma_\kst}\left(\tau\sqrt{\kst}\delta + \frac{\tau+1}{2}\sqrt{\kst}\delta\right)\le \frac{3\tau+1}{2}\frac{\sqrt{\kst}}{\sigma_\kst}\delta.
	\end{align*}
	
	Then
	
	\begin{align*}
	\|\xdp-x^\dagger\|\omd\chi_{\{\kdp\le \kst\}} &\le \sqrt{\sum_{j=1}^\kst\frac{(y^\delta-y^\dagger,u_j)^2}{\sigma_j^2}}\omd + \sqrt{\sum_{j=\kdp+1}^{\kst}(x^\dagger,v_j)^2}\omd + \sqrt{\sum_{j=\kst+1}^\infty(x^\dagger,v_j)^2}\\
	&\le \frac{\tau+1}{2}\frac{\sqrt{\kst}}{\sigma_\kst}\delta + \frac{3\tau+1}{2}\frac{\sqrt{\kst}}{\sigma_\kst}\delta + \sqrt{2}\min_{k\in\N_0}\|x_k^\delta-x^\dagger\|\\
	&=\left(2\tau+1\right)\frac{\sqrt{\kst}}{\sigma_\kst}\delta + \sqrt{2}\min_{k\in\N_0}\|x_k^\delta-x^\dagger\|.
	\end{align*}
	
	All we need to do to finish the proof is to control the strong oracle $\kst$. We will need the following two estimates for $k\in\N$ and $p>0$
	
	\begin{align*}
	\sum_{j=1}^k j^{p}\le  \int_1^{k+1}(x+1)^p{\rm d}x \le \frac{(k+2)^{1+p}}{1+p},\\
	 \sum_{j=1}^k  j^p \ge \int_1^{k+1}(x-1)^p{\rm d}x \ge \frac{k^{1+p}}{1+p}.
	\end{align*}
	
	 For any $k\ge1$  there holds

\begin{align}\notag
&\mathbb{P}\left(\left|\sum_{j=1}^k\frac{(y^\delta-y^\dagger,u_j)^2}{\sigma_j^2}- \delta^2\sum_{j=1}^k\frac{1}{\sigma_j^2}\right|\ge \frac{\delta^2}{2} \sum_{j=1}^k \frac{1}{\sigma_j^2}\right)\\\notag
\le &\frac{4\E\left[\left|\sum_{j=1}^k\frac{(y^\delta-y^\dagger,u_j)^2-\delta^2}{\sigma_j^2}\right|^2\right]}{ \delta^4 \left(\sum_{j=1}^k \frac{1}{\sigma_j^2}\right)^2} = \frac{4\delta^4\sum_{j=1}^k \frac{\E\left[\left|(Z,u_j)^2-1\right|^2\right]}{\sigma_j^4}}{\delta^4\left(\sum_{j=1}^k\frac{1}{\sigma_j^2}\right)^2}
\le \frac{8(\gamma_4+1)\sum_{j=1}^k\frac{1}{\sigma_j^4}}{\left(\sum_{j=1}^k\frac{1}{\sigma_j^2}\right)^2}\le \frac{\frac{8(\gamma_4+1)}{c_q^2}\sum_{j=1}^kj^{2q}}{\frac{1}{C_q^2}\left(\sum_{j=1}^k j^q\right)^2}\\\label{cor1:eq2}
\le & \frac{8(\gamma_4+1)C_q^2}{c_q^2} \frac{\frac{1}{2q+1} (k+2)^{2q+1}}{\left(\frac{1}{q+1} k^{q+1}\right)^2}\le \frac{8(\gamma_4+1)C_q^2(q+1)^23^{2q+1}}{c_q^2(2q+1)}\frac{1}{k}
=\frac{(\gamma_4+1)\tilde{C}_q}{k}
\end{align}

	with $\tilde{C}_q:=\frac{8C_q^2(q+1)^23^{2q+1}}{c_q^2(2q+1)}$. Similar to $\ekpr$ we define
	
	\begin{equation}
\ekst:=\min\left\{k\ge 0 ~:~ \delta^2\sum_{j=1}^k\frac{1}{\sigma_j^2} \ge \sum_{j=k+1}^\infty(x^\dagger,v_j)^2\right\}
	\end{equation}
	
	and set
	
	\begin{equation}
	 \tilde{\Omega}_{\ekst}:=\left\{\left|\sum_{j=1}^{\lceil\frac{\ekst}{3}\rceil}\frac{(y^\delta-y^\dagger,u_j)^2}{\sigma_j^2}- \delta^2\sum_{j=1}^{\lceil\frac{\ekst}{3}\rceil}\frac{1}{\sigma_j^2}\right|\ge \frac{\delta^2}{2} \sum_{j=1}^{\lceil\frac{\ekst}{3}\rceil} \frac{1}{\sigma_j^2},~\left|\sum_{j=1}^{3\ekst}\frac{(y^\delta-y^\dagger,u_j)^2}{\sigma_j^2}- \delta^2\sum_{j=1}^{3\ekst}\frac{1}{\sigma_j^2}\right|\ge \frac{\delta^2}{2} \sum_{j=1}^{3\ekst} \frac{1}{\sigma_j^2}\right\}.
	\end{equation}
	
	Note that $\ekst\ge \ekpr$, cf. Proposition \ref{prop1a}. On $\omk$ there holds that
	
	\begin{equation}\label{cor1:eq1}
	\frac{\ekst}{3}\omk\le \kst \omk -1 < \kst \omk \le 3\ekst \omk.
	\end{equation}
	
	We show the claim \eqref{cor1:eq1}. It holds that
	
	\begin{align*}
	\sum_{j=1}^{\lceil\frac{\ekst}{3}\rceil} \frac{(y^\delta-y^\dagger,u_j)^2}{\sigma_j^2}\omk &\le \frac{3\delta^2}{2}\sum_{j=1}^{\lceil\frac{\ekst}{3}\rceil}\frac{1}{\sigma_j^2}\le \frac{3\delta^2}{2} \left(\frac{\lceil\frac{\ekst}{3}\rceil}{\ekst-1} \sum_{j=1}^{\ekst-1} \frac{1}{\sigma_j^2}\right) \le \delta^2\sum_{j=1}^{\ekst-1}\frac{1}{\sigma_j^2}<\sum_{j=\ekst}^\infty(x^\dagger,v_j)^2\le \sum_{j=\lceil\frac{\ekst}{3}\rceil+1}^\infty(x^\dagger,v_j)^2
	\end{align*}
	
	where we used that $\sigma_j$ is monotonically decreasing in the second step and $\ekst\ge \kappa \ge 3$ in the third and in the last step. This shows that $\frac{\ekst}{3}\omk \le \kst-1$. For the second claim we can assume w.l.o.g. that $\sum_{j=3\ekst+1}^\infty(x^\dagger,v_j)^2>0$. Then we have
	
    \begin{align*}
    \sum_{j=1}^{3\ekst}\frac{(y^\delta-y^\dagger,u_j)^2}{\sigma_j^2}\omk \ge \frac{\delta^2}{2} \sum_{j=1}^{3\ekst}\frac{1}{\sigma_j^2}\omk \ge \frac{3\delta^2}{2}\sum_{j=1}^{\ekst}\frac{1}{\sigma_j^2}\ge \frac{3}{2}\sum_{j=\ekst+1}^\infty(x^\dagger,v_j)^2> \sum_{j=3\ekst+1}^\infty(x^\dagger,v_j)^2
    \end{align*}
	
	and we deduce that $\kst\omk\le 3\ekst$. Now we are ready to finish the proof of Corollary \ref{cor1}. It is
	
	\begin{align*}
	\delta^2\frac{\kst}{\sigma_\kst^2}\omk &\le \delta^2 \frac{3\ekst}{\sigma_{3\ekst}^2}\omk\le \frac{\delta^2}{c_q}(3\ekst)^{1+q}\omk\le \frac{9^{1+q}\delta^2}{c_q} \left\lceil\frac{\ekst}{3}\right\rceil^{1+q}\omk\le \frac{9^{1+q}\delta^2(1+q)C_q}{c_q} \sum_{j=1}^{\lceil \frac{\ekst}{3}\rceil}\frac{1}{\sigma_j^2}\omk\\
	&\le \frac{9^{1+q}(1+q)C_q}{c_q}2 \sum_{j=1}^{\lceil\frac{\ekst}{3}\rceil}\frac{(y^\delta-y^\dagger,u_j)^2}{\sigma_j^2}\omk\le \frac{9^{1+q}(1+q)C_q}{c_q}2\sum_{j=1}^{\kst-1}\frac{(y^\delta-y^\dagger,u_j)^2}{\sigma_j^2}\\
	&\le \frac{9^{1+q}(1+q)C_q}{c_q}4 \min_{k\in\N_0}\|x_k^\delta-x^\dagger\|^2.
	\end{align*}
	
	Putting all together we obtain
	
	\begin{align*}
	\|\xdp-x^\dagger\|\omk\omd &\le \max\left(\sqrt{2}\left(\frac{\tau+1}{\tau-1} +1\right), \sqrt{2}+(2\tau+1)\sqrt{\frac{9^{1+q}(1+q)C_q}{c_q}4}\right)\min_{k\in\N_0}\|x_k^\delta-x^\dagger\|
	\end{align*}
	
	and the proof follows with 
	
	$$\mathbb{P}\left(\omd\omk\right)\ge 1-\mathbb{P}\left(\omd\right)-\mathbb{P}\left(\omk\right) \ge 1 - \max\left(\frac{12}{\tau^2+2\tau-3},9\right)\frac{\sqrt{1+\gamma_4}}{\sqrt{\kappa}} - \frac{2(\gamma_4+1)\bar{C}_q}{\kappa},$$
	
	where we used \eqref{th1:eq0a} with Proposition \ref{prop1b} and \eqref{cor1:eq2}.
	
\end{proof}

\section{Numerical comparison}\label{sec5}

In this section we compare $k^\delta_{dp}$ with $k^\delta_{bal}$ and $k^\delta_{es}$ numerically. As examples we take four model problems from the open-source MATLAB tool-box Regutools \cite{hansen1994regularization}. Namely these are \texttt{phillips} and \texttt{deriv2} (mildly ill-posed) on the one hand and \texttt{gravity} and \texttt{heat} (severely ill-posed) on the other hand. They cover various settings and are discretisation of Volterra/Fredholm integral equations, which are solved by means of either quadrature rules or Galerkin methods. Note that the problems behind \texttt{deriv2} (the second derivative) and \texttt{heat} (the backwards heat equation) are also part of the recent numerical survey \cite{werner2018adaptivity}. The discretisation dimension for the examples is fixed at $D=5000$. As measurement noise we choose Gaussian white noise and the singular value decomposition is determined numerically with the function \texttt{csvd} from the toolbox. The simulations are run for noise levels $\delta=10^0,10^{-2},10^{-4},10^{-6}$.
The statistical quantities are computed by $1000$ independent Monte Carlo samples. Similar to the numerical survey \cite{werner2018adaptivity} we set $\tau=1.5$ in the definition of the discrepancy principle and $\kappa=4$ in the definition of the balancing principle, i.e,. for the balancing principle we choose a $\delta$ independent fudge parameter. Note that for $\kappa=4$ the probability $p_\kappa$ of a bad event from example \ref{example} is extremely small compared to only $100$ consecutive runs. For comparison we also calculated the (clearly inaccessible) optimal truncation level $k^\delta_{opt}:=\arg\min_{k\in\N}\|x^\delta_k-x^\dagger\|$. We express the sample mean of $e_{\cdot}:=\|x^\delta_{k^\delta_{\cdot}}- x^\dagger\|$  (where $\cdot={\rm dp}, {\rm bal}, {\rm es}, {\rm opt}$) together with the estimated standard deviation in tabular form in Tables 1-4. In Tables 5-8 we also present the statistics of the corresponding truncation levels $k^\delta_{\cdot}$ together with the optimal stopping time $k^\delta_{opt}=\arg\min \|x_k^\delta-x^\dagger\|$ and the weak and strong oracles $\kpr$ and $\kst$. It is clearly visible that $\kdp$ and $k^{\delta}_{\rm opt}$ are close to $\kpr$ and $\kst$ respectively, in accordance with Proposition \ref{prop1}. Interestingly, $k^\delta_{bal}$ tends to be close to $\kdp$ and hence $\kpr$ instead of $\kst$. This is probably due to the comparably large fudge parameter $\kappa=4$. Choosing a substantially smaller fudge parameter $\kappa$ however is less stable and we decided to stick to the choice from \cite{werner2018adaptivity}. We also see that the error for $k^\delta_{es}$ is quite large. From the Tables 5-8 it is not clear if this is due to very rare events where one substantially stops too late, or if there is a regular late stopping. We visualise the statistics of $\kpr$ as boxplots in Figure 1. While the median is relatively close to $\kpr$, in a substantial part of all runs it stops way too late. In one fourth of all runs the stopping happens later than the corresponding index of the upper border of the blue box. However, $k^\delta_{es}$ concentrates more and more as the noise level decreases. This result indicates that the early stopping discrepancy principle is better suited for comparably smaller noise levels.

\begin{table}[hbt!]
	\centering
	\caption{Sample mean and standard deviation of $e_{\cdot}$ for \texttt{phillips}.\label{tab:er-phillips}}
	\setlength{\tabcolsep}{4pt}
	\begin{tabular}{c|cccccccc|}
		\toprule
  $\delta$	& $e_{\rm dp}$ & $\mathrm{std}(e_{\rm dp})$ & $e_{\rm bal}$ & $\mathrm{std}(e_{\rm bal})$& $e_{\rm es}$& $\mathrm{std}(e_{\rm es})$ &$e_{\rm opt}$ &$\mathrm{std}(e_{\rm opt})$\\
 e0    & 1.0e0 & (1e-1)      & 1.1e0 &(2e-1) & 8.9e4 &(4e5) &6.3e-1 &(2e-1)\\
 e-2  & 7.6e-2 &(1e-3)   & 7.6e-2& (1e-3) & 5.7e2 &(2e3)&6.7e-2 &(1e-2)\\
 e-4     & 1.3e-2 &(3e-4)     & 1.3e-2& (3e-4) & 5.2e0 &(1e1)& 1.1e-2 &(2e-3)\\
 e-6 & 2.6e-3 &(3e-4) & 2.9e-3 &(1e-4) & 1.1e-1 &(3e-1)& 1.7e-3 &(2e-3)\\
\bottomrule
	\end{tabular}
\end{table}

\begin{table}[hbt!]
	\centering
	\caption{Sample mean and standard deviation of $e_{\cdot}$ for \texttt{deriv2}.\label{tab:er-deriv2}}
	\setlength{\tabcolsep}{4pt}
	\begin{tabular}{c|cccccccc|}
		\toprule
		$\delta$	& $e_{\rm dp}$ & $\mathrm{std}(e_{\rm dp})$ & $e_{\rm bal}$ & $\mathrm{std}(e_{\rm bal})$& $e_{\rm es}$& $\mathrm{std}(e_{\rm es})$ &$e_{\rm opt}$ &$\mathrm{std}(e_{\rm opt})$\\
		e0    & 8.3e0 &(3e1)      & 1.8e0 &(0) & 4.7e5 &(1e6) &1.7e0 &(2e-1)\\
		e-2  & 1.0e0 &(4e-1)   & 9.5e-1 &(8e-3) & 3.2e3 &(9e3)&8.5e-1 &(8e-2)\\
		e-4     & 4.7e-1 &(1e-2)     & 5.2e-1 &(3e-2) & 2.7e1 &(6.0e1)& 4.1e-1 &(2e-2)\\
		e-6 & 2.3e-1 &(4e-3) & 2.5e-1 &(4e-3) & 5.7e-1 &(1e0)& 2.0e-1 &(5e-3)\\
		\bottomrule
	\end{tabular}
\end{table}

\begin{table}[hbt!]
	\centering
	\caption{Sample mean and standard deviation of $e_{\cdot}$ for \texttt{gravity}.\label{tab:er-gravity}}
	\setlength{\tabcolsep}{4pt}
	\begin{tabular}{c|cccccccc|}
		\toprule
	$\delta$	& $e_{\rm dp}$ & $\mathrm{std}(e_{\rm dp})$ & $e_{\rm bal}$ & $\mathrm{std}(e_{\rm bal})$& $e_{\rm es}$& $\mathrm{std}(e_{\rm es})$ &$e_{\rm opt}$ &$\mathrm{std}(e_{\rm opt})$\\
	e0    & 4.6e0 &(6e-1)      & 5.0e0 &(8e-2) & 5.2e15 &(9e15) &3.2e0 &(6e-1)\\
	 e-2  & 8.9e-1 &(2e-1)   & 9.5e-1 &(2e-1) & 4.0e13 &(7e13)&6.5.8e-1 &(1e-1)\\
	 e-4     & 1.8e-1 &(2e-3)     & 1.8e-1 &(7e-3) & 5.2e11 &(9e11)& 1.2e-1 &(2e-2)\\
	 e-6 & 4.3e-2 &(1e-3) & 3.9e-2 &(6e-3) & 3.8e9 &(8e9)& 2.5e-2 &(4e-3)\\
	\bottomrule
\end{tabular}
\end{table}

\begin{table}[hbt!]
	\centering
	\caption{Sample mean and standard deviation of $e_{\cdot}$ for \texttt{heat}.\label{tab:er-heat}}
	\setlength{\tabcolsep}{4pt}
	\begin{tabular}{c|cccccccc|}
		\toprule
		$\delta$	& $e_{\rm dp}$ & $\mathrm{std}(e_{\rm dp})$ & $e_{\rm bal}$ & $\mathrm{std}(e_{\rm bal})$& $e_{\rm es}$& $\mathrm{std}(e_{\rm es})$ &$e_{\rm opt}$ &$\mathrm{std}(e_{\rm opt})$\\
		e0    & 1.7e1 &(6e0)      & 1.7e1 &(5e-1) & 5.6e7 &(4e8) &1.4e1 &(1e0)\\
		e-2  & 4.8e0 &(3e-1)   & 5.2e0 &(5e-2) & 1.1e5 &(5e5)&3.1e0 &(5e-1)\\
		e-4     & 6.4e-1 &(7e-2)     & 7.5e-1 &(6e-2) & 1.1e2 &(4e2)& 3.8e-1 &(3e-2)\\
		e-6 & 1.7e-1 &(3e-3) & 1.8e-1 &(5e-3) & 8.4e1 &(6e2)& 1.1e-1 &(8e-3)\\
		\bottomrule
	\end{tabular}
\end{table}

\begin{table}[hbt!]
	\centering
	\caption{Sample mean and standard deviation of $k_{\cdot}$ for \texttt{phillips}.\label{tab:k-phillips}}
	\setlength{\tabcolsep}{4pt}
\begin{tabular}{c|cccccccccccc|}
	\toprule
	$\delta$ & $k_{\rm dp}$ &$\mathrm{std}(k_{\rm dp})$& $k_{\rm bal}$ &$\mathrm{std}(k_{\rm bal})$& $k_{\rm es}$ &$\mathrm{std}(k_{\rm es})$& $k_{\rm opt}$ &$\mathrm{std}(k_{\rm opt})$&$k_{\rm pr}$ &$\mathrm{std}(k_{\rm pr})$ & $k_{\rm st}$ & $\mathrm{std}(k_{\rm st})$\\
	 e0    & 3.3 &(0.7)      & 2.9 &(0.5) & 47.5 &(63.7) &5 &(0.6)& 4.5 &(0.8) & 5.2 &(0.5)\\
	 e-2  & 7 &(0)   & 7 &(0) & 43.7 &(53.8)&8.1 &(1.2) & 7 &(0,1) & 9.4 & (0.7)\\
	 e-4     & 12 &(0)     & 12 &(0) & 48.5 &(51.2)& 15.9 &(2) & 12.1 &(0.6) & 17.4 &(1)\\
	 e-6 & 26.1 &(1) & 25.1 &(0.4) & 59.7 &(60.3)& 34.3 &(3.3) & 28 &(1) &35.3 & (1.2)\\
	\bottomrule
\end{tabular}
\end{table}

\begin{table}[hbt!]
	\centering
	\caption{Sample mean and standard deviation of $k_{\cdot}$ for \texttt{deriv2}.\label{tab:k-deriv2}}
	\setlength{\tabcolsep}{4pt}
	\begin{tabular}{c|cccccccccccc|}
		\toprule
		$\delta$ & $k_{\rm dp}$ &$\mathrm{std}(k_{\rm dp})$& $k_{\rm bal}$ &$\mathrm{std}(k_{\rm bal})$& $k_{\rm es}$ &$\mathrm{std}(k_{\rm es})$& $k_{\rm opt}$ &$\mathrm{std}(k_{\rm opt})$& $k_{\rm pr}$ &$\mathrm{std}(k_{\rm pr})$ & $k_{\rm st}$ & $\mathrm{std}(k_{\rm st})$\\
		e0    & 0.3 &(0.5)      & 0 &(0) & 43.5 &(72) &1.1 &(0.3)& 1 & (0) & 1.1 & (0.3)\\
		e-2  & 1.3 &(0.6)   & 1 &(0) & 39 &(62)&3.2 &(0.8) & 2 & (0.7) & 3.4 & (0.8)\\
		e-4     & 7 &(0.5)     & 46 &(54) & 48.6 &(59.5)& 12.4 &(1.7) & 8.7 & (0.8) & 14.6 & (1.5)\\
		e-6 & 32 &(1.1) & 27 &(1) & 53 &(38)& 53 &(4.6) & 39 & (1.5) &66.5 & (3.2) \\
		\bottomrule
	\end{tabular}
\end{table}

\begin{table}[hbt!]
	\centering
	\caption{Sample mean and standard deviation of $k_{\cdot}$ for \texttt{gravity}.\label{tab:k-gravity}}
	\setlength{\tabcolsep}{4pt}
	\begin{tabular}{c|cccccccccccc|}
		\toprule
	$\delta$ & $k_{\rm dp}$ &$\mathrm{std}(k_{\rm dp})$& $k_{\rm bal}$ &$\mathrm{std}(k_{\rm bal})$& $k_{\rm es}$ &$\mathrm{std}(k_{\rm es})$& $k_{\rm opt}$ &$\mathrm{std}(k_{\rm opt})$& $k_{\rm pr}$ &$\mathrm{std}(k_{\rm pr})$ & $k_{\rm st}$ & $\mathrm{std}(k_{\rm st})$\\
	 e0    & 4.3 &(0.5)      & 4 &(0.1) & 44.7 &(61) &5.9 &(0.8)& 4.2 &(0.4) & 5.7& (0.6)\\
	 e-2  & 8.7 &(0.5)   & 8.5 &(0.5) & 28.2 &(47.1)&10.5 &(0.8) & 8.2& (0.4) & 10.3 & (0.6)\\
	 e-4     & 13 &(0.1)     & 13 &(0.2) & 48.6 &(59.5)& 14.9 &(0.7) & 12.7 &(0.4) & 14.7  & (0.6)\\
	 e-6 & 17 &(0.1) & 17.4 &(0.5) & 43.7 &(49.4)& 19.3 &(0.7) & 17 & (0) & 19.2  & (0.6)\\
	\bottomrule
\end{tabular}
\end{table}

\begin{table}[hbt!]
	\centering
	\caption{Sample mean and standard deviation of $k_{\cdot}$ for \texttt{heat}.\label{tab:k-heat}}
	\setlength{\tabcolsep}{4pt}
	\begin{tabular}{c|cccccccccccc|}
		\toprule
		$\delta$ & $k_{\rm dp}$ &$\mathrm{std}(k_{\rm dp})$& $k_{\rm bal}$ &$\mathrm{std}(k_{\rm bal})$& $k_{\rm es}$ &$\mathrm{std}(k_{\rm es})$& $k_{\rm opt}$ &$\mathrm{std}(k_{\rm opt})$& $k_{\rm pr}$ &$\mathrm{std}(k_{\rm pr})$ & $k_{\rm st}$ & $\mathrm{std}(k_{\rm st})$\\
		e0    & 1.5 &(0.9)      & 0.1 &(0.3) & 53 &(66) &3.3 &(0.8)& 1.6 &(0.6) & 2.9 &(0.9)\\
		e-2  & 9.9 &(0.8)   & 9 &(0.1) & 41 &(53)&17 &(1) & 9.4 &(0.5) &13.8 &(0.8)\\
		e-4     & 26 &(0.6)     & 25 &(1.4) & 48 &(38)& 32 &(2) & 24.1 &(1.3) &29.4 &(1)\\
		e-6 & 51 &(0.4) & 50 &(0.5) & 72 &(42)& 67 &(2) & 49.4 &(0.9)& 62.4 & (1.5)\\
		\bottomrule
	\end{tabular}
\end{table}

\begin{figure}
	\label{box}
	\centering
	\setlength{\tabcolsep}{4pt}
	\begin{tabular}{c c}
		\includegraphics[width=.55\textwidth]{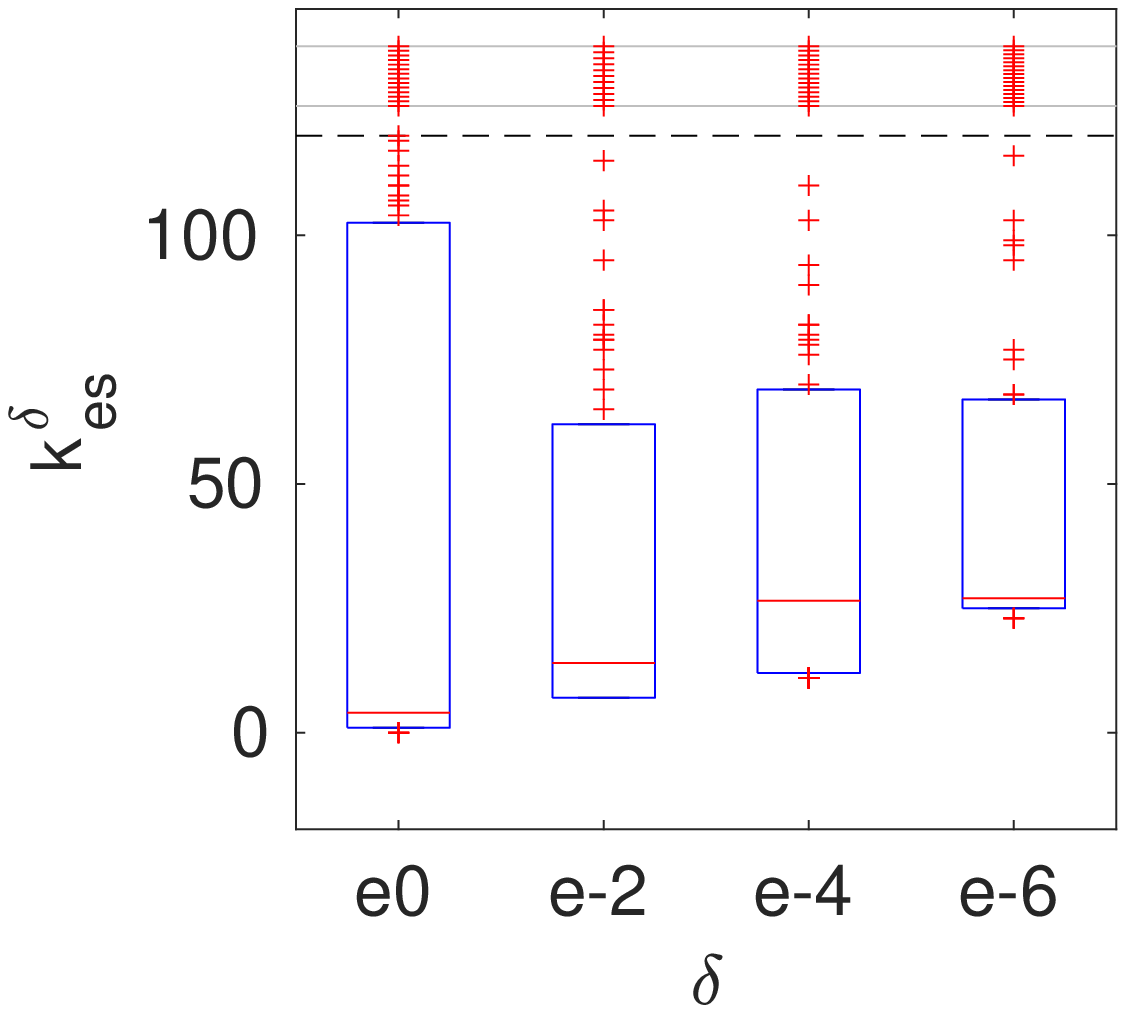} & \includegraphics[width=.55\textwidth]{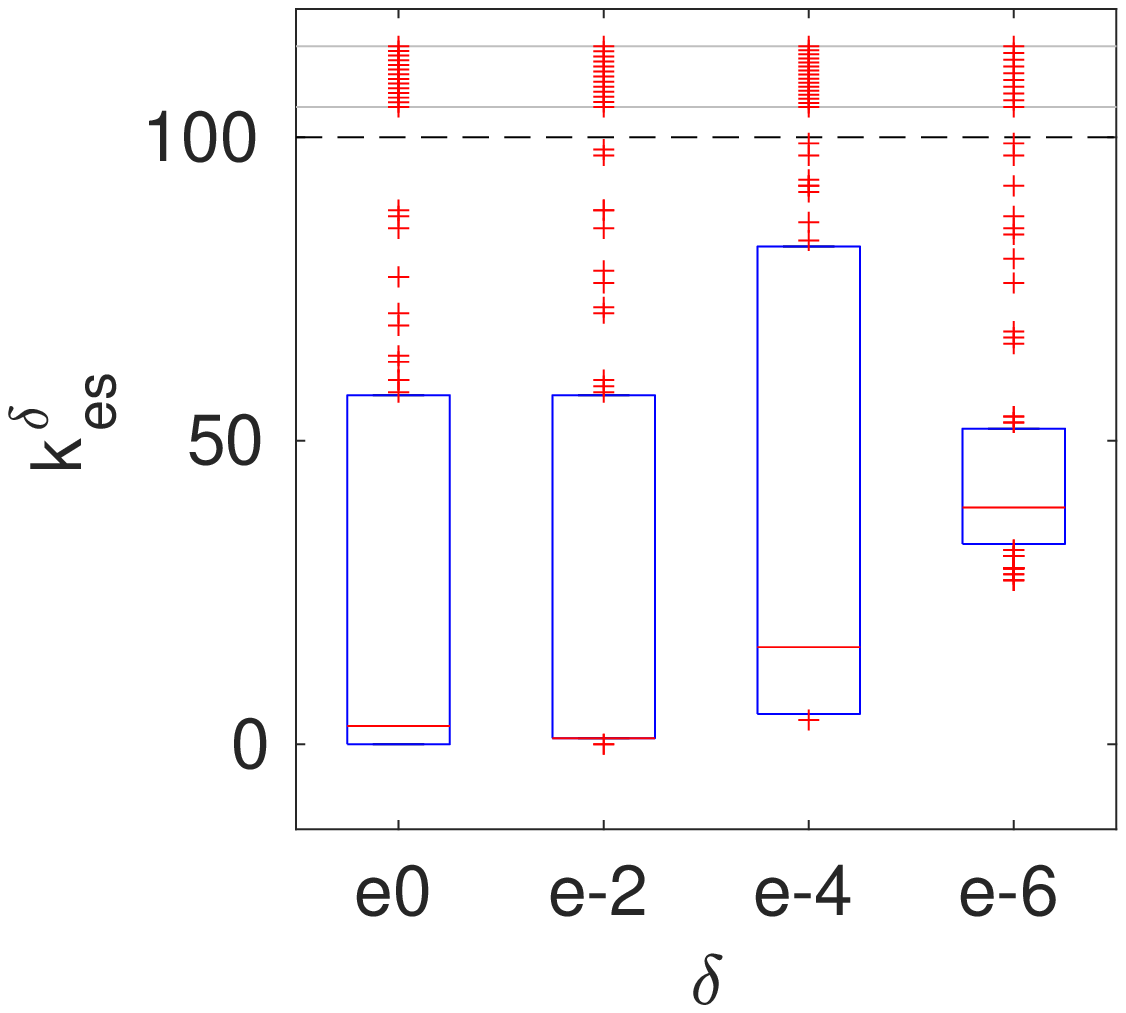}\\ 
		\includegraphics[width=.55\textwidth]{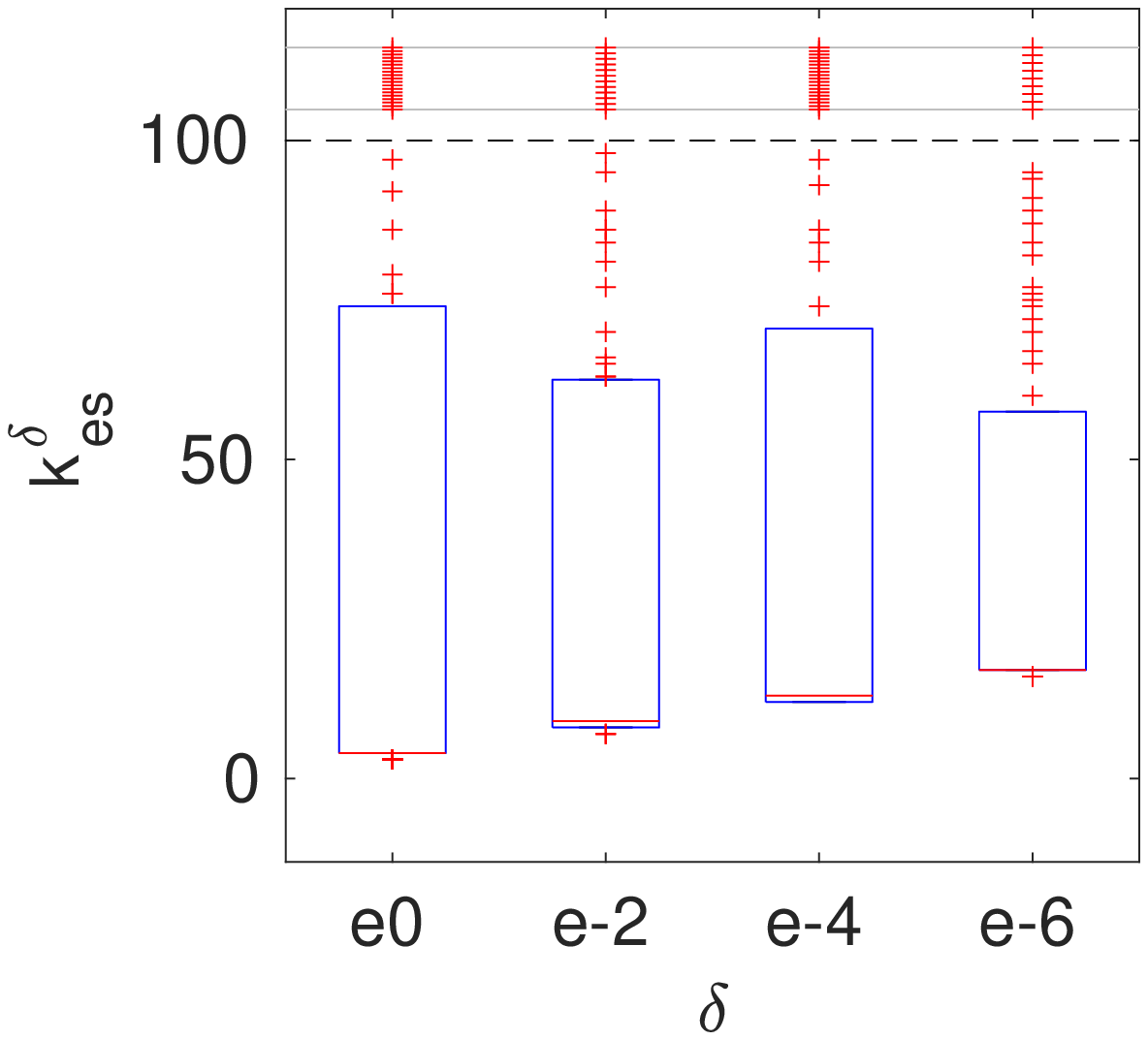} & \includegraphics[width=.55\textwidth]{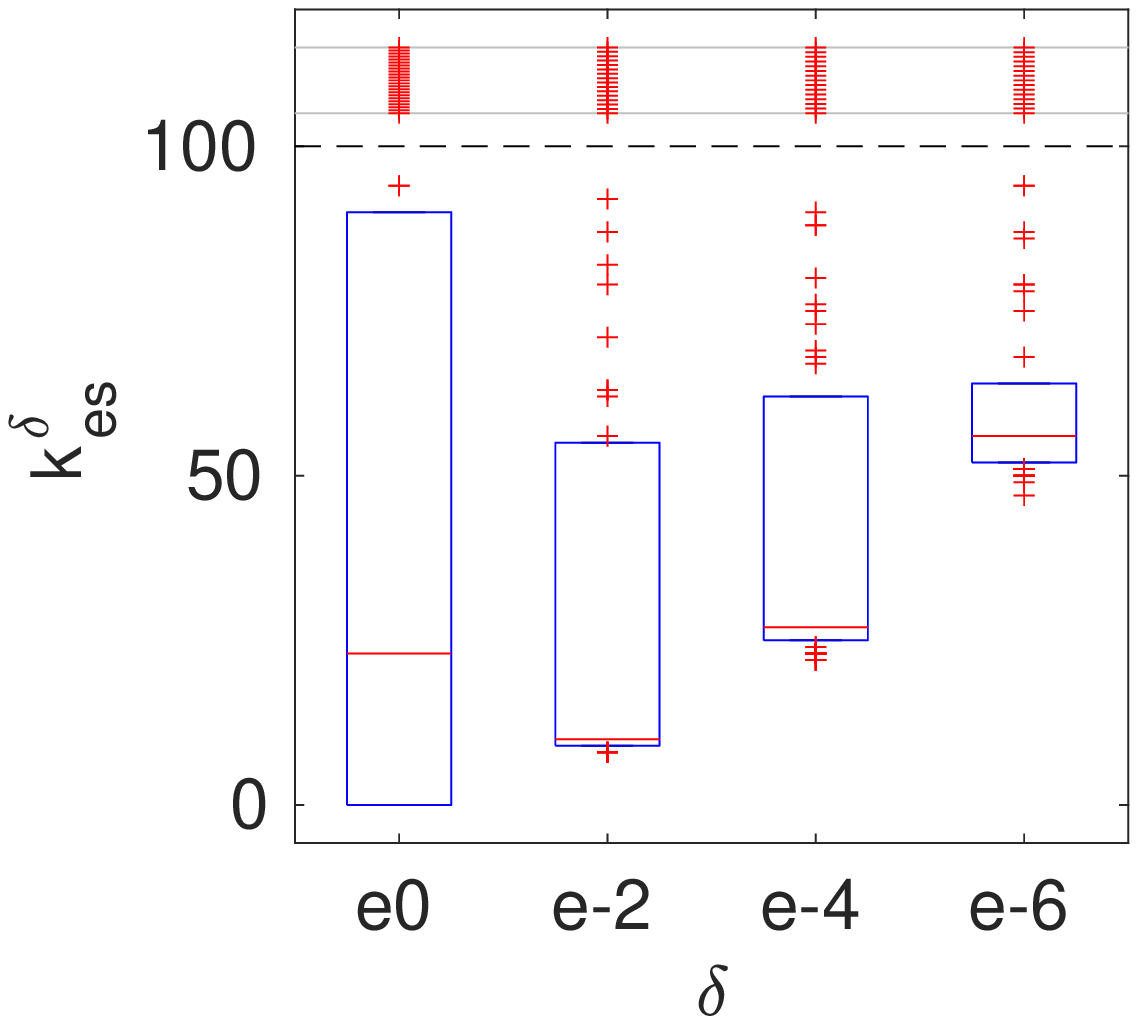}\\
	\end{tabular}
	\caption{Boxplots of the 100 realisations of $k^\delta_{es}$ for decreasing noise level $\delta$ and the different test problems (Upper left corner: \texttt{phillips}, upper right corner: \texttt{deriv2}, lower left corner: \texttt{gravity}, lower right corner: \texttt{heat}). On each blue box, the central red bar indicates the median, and the bottom and top edges of the box indicate the 25th and 75th percentiles. Red crosses depict outliers, which are data points falling outside the blue box. }
\end{figure}

\section{Acknowledgments}
Funded  by  the  Deutsche  Forschungsgemeinschaft (DFG, German Research Foundation) under Germany's Excellence Strategy  - GZ 2047/1, Projekt-ID 390685813. The author would like to thank Prof. Markus Rei\ss{} for pointing out possible relations of the modified discrepancy principle to Lepski's method.

\bibliographystyle{abbrv}
\bibliography{references}
\end{document}